\documentclass[12pt,leqno]{amsart}
\usepackage{amscd}
\usepackage{color}
\usepackage{amssymb}
\usepackage{amsfonts}
\usepackage{latexsym}
\usepackage{verbatim}

\theoremstyle{plain}
\newtheorem{theorem}{Theorem}[section]

\newtheorem{lemma}[theorem]{Lemma}
\newtheorem{prop}[theorem]{Proposition}

\renewcommand{\b}{\begin{equation}}
\newcommand{\e}{\end{equation}}

\newcommand\R{{\mathbb R}}

\title[Calabi Estimate]{On the Calabi estimate of geometric flows of Hermitian metrics }

\makeatletter
\@namedef{subjclassname@2020}{%
 \textup{2020} Mathematics Subject Classification}
\makeatother
\thanks{This work was supported by GNSAGA of INdAM}
\subjclass[2020]{53C26, 32Q55, 35B45}

\address{ Dipartimento di Matematica G. Peano \\ Universit\`a di Torino\\
Via Carlo Alberto 10\\
10123 Torino\\ Italy}
\email{ma.gallo@unito.it, luigi.vezzoni@unito.it}

\begin{document}
\author{Marco Gallo and Luigi Vezzoni}
\maketitle

\date{\today}
\begin{abstract}  
We establish a general result ensuring a $C^1$
 a priori bound for smooth curves of Hermitian metrics. As a main application, we obtain a new regularity result for Hermitian curvature flows, and in particular for the second Chern–Ricci flow.     
\end{abstract}

\section{Introduction}
In the present paper we focus on geometric flows of Hermitian metrics in compact complex manifolds. After Cao proved \cite{Cao} that the Ricci flow on K\"ahler manifolds could be used to provide an alternative proof of the Calabi-Yau theorem  \cite{Yau}, the K\"ahler-Ricci flow became a fundamental tool in K\"ahler geometry. In contrast to the K\"ahler case, in the Hermitian setting with torsion the Ricci flow does not in general preserve the Hermitian property of the initial metric and its role is often replaced by other second order geometric flows. In \cite{HCF} Streets and Tian introduced a family of geometric flows of Hermitian metrics (Hermitian curvature flows, or shortly ${\rm HCF}$) which evolves an initial Hermitian metric by its second Chern-Ricci form plus a quadric term $Q$ in the torsion. The following belong to this family: the {\em gradient HCF} \cite{HCF}, the {\em pluriclosed flow} \cite{ST}, the {\em positive HCF} \cite{YU1} and the {\em second Chern-Ricci flow} \cite{Lee}. Another evolution equation was introduced by Tosatti and Weinkove in \cite{TW} who considered the geometric flow governed by the first Chern-Ricci form. In contrast to the Hermitian curvature flows, the {\em Chern-Ricci flow} is a potential flow and it can be reduced to a parabolic complex Monge-Amp\`ere equation. 
A different approach was considered by 
Phong, Picard and Zhang with the anomaly flow \cite{AF8} and the type IIB flow \cite{AF7}. These flows are not a generalization of the K\"ahler-Ricci flow and their study is originated by the study of the Hull-Strominger system \cite{hull,Strominger}. 

\medskip 
We say that a smooth curve of Hermitian metrics $g=g(t)$ on a compact Hermitian manifold $(M,\hat g)$ satisfies a {\em Calabi estimate} if 
\begin{equation}\label{Calabi}
|\hat \nabla g|_{g}^2\leq C
\end{equation}
for a uniform constant $C$, where $\hat \nabla$ is the Chern connection of $\hat g$. Usually to have the Calabi estimate on a solution $g$ to a geometric flow, we have to impose that $g$ and $\hat g$ satisfy the {\em uniform equivalence relation}  
\begin{equation}\label{ass1}
K^{-1}\hat g\leq g\leq K\hat g
\end{equation}
for a uniform constant $K$. A Calabi-type estimate holds for the K\"ahler-Ricci flow \cite{SW1}, the Chern-Ricci flow \cite{SW2}, the pluriclosed flow \cite{mario,jordan,S2} and for the type IIB flow under a control on the trace of the torsion \cite{estimates}. If $g$ solves a quasi-linear parabolic equation, then the Calabi estimate combined with the conformal equivalence \eqref{ass1} implies a control on the high-order derivatives of $g$ (see \cite[Section 4]{estimates}). This can be in particular applied to Hermitian curvature flows \cite{estimates}.  

\medskip 
The aim of the present paper is to provide a unified point of view by pointing out general sufficient conditions for a geometric flow to satisfy a Calabi estimate \eqref{Calabi}. 
Some of the results we mentioned before can be deduced from our results.

\medskip 
Let $(M,\hat g)$ be a compact Hermitian manifold and let $g=g(t)$ be a smooth curve  of Hermitian metrics on $M$.  Let $\hat \nabla$ and $\nabla$ be the Chern connections of $\hat  g$ and $g$, respectively, and let  
$$
S:=|\hat\nabla g|_{g}^2=|\Psi|_g^2\,,
$$
where $\Psi:=\nabla -\hat \nabla$.  
     
\begin{theorem}\label{main1}
Let $g=g(t)$, be a smooth curve of Hermitian metrics on a compact Hermitian manifold $(M,\hat g)$. Assume that $g$ satisfies the uniform equivalent condition \\ \eqref{ass1} and  
\begin{eqnarray}
&&\label{ass2} |\dot g+\widetilde{\rm Ric}|_g\leq K (1+S^{1/2})\,,\\
&&\label{ass3} |\nabla(\dot g+\widetilde{\rm Ric})|_{g}\leq 
 K (|\nabla \Psi|_g+ |\bar \nabla \Psi|_g+1+S)\,,\\
&&\label{ass4} |T|_g\leq K
\end{eqnarray}
for a constant $K$. 
Then there exists a constant $C$ depending on $M$, $K$, $\hat g$ and $g(0)$ only, such that 
$$
|\hat \nabla g|_{g}^2\leq C\,. 
$$
\end{theorem}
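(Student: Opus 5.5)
We argue by the maximum principle applied to a quantity of the form $F = S + A\,{\rm tr}_{\hat g} g$ (or $S + A\,{\rm tr}_g \hat g$), for a large constant $A$ depending only on $K$, $\hat g$ and $M$. This follows the scheme used for the Chern--Ricci and pluriclosed flows. Here $\widetilde{\rm Ric}$ denotes the second Chern--Ricci curvature, so that $\dot g=-\widetilde{\rm Ric}+E$ with $E:=\dot g+\widetilde{\rm Ric}$. The idea is that $\partial_t-\Delta$ applied to $S$ produces the good terms $-|\nabla\Psi|_g^2-|\bar\nabla\Psi|_g^2$. The error terms coming from $E$, from the torsion and from the curvature of $\hat g$ are then absorbed using \eqref{ass2}--\eqref{ass4}. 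Throughout, \eqref{ass1} allows us to replace norms with respect to $\hat g$ by norms with respect to $g$ at the cost of constants.

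\emph{Step 1: evolution of $\Psi$ and of $S$.} Since $\Psi^k_{ij}=g^{k\bar l}\hat\nabla_i g_{j\bar l}$, we have $\partial_t\Psi=\nabla\dot g$ (contracted with $g^{-1}$). Write $\dot g=-\widetilde{\rm Ric}+E$. The second Chern--Ricci form is $\widetilde{\rm Ric}_{i\bar j}=g^{k\bar l}R_{k\bar l i\bar j}$. Expressing $R$ through $\hat R$ and $\bar\partial\Psi$, one gets $\widetilde{\rm Ric}_{i\bar j}=-g^{k\bar l}\hat\nabla_{\bar l}\hat\nabla_k g_{i\bar j}+(\text{terms in }\Psi\ast\bar\Psi\ast g,\ \hat R)$. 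Hence
$$
\partial_t\Psi=\Delta\Psi+T\ast\nabla\Psi+\Psi\ast\nabla\Psi+\hat R\ast\Psi+\hat\nabla\hat R+\nabla E,
$$
where $\Delta=g^{k\bar l}\nabla_k\nabla_{\bar l}$. The torsion terms arise from commuting $\nabla_k$ and $\nabla_{\bar l}$, and from the difference between the first and second Ricci contractions. The key observation is that the cubic $\Psi$-terms either cancel or appear only as $\Psi\ast\nabla\Psi$ and $T\ast\nabla\Psi$. Differentiating $S=|\Psi|^2_g$ and also using $\partial_t g^{-1}=-g^{-1}\dot g g^{-1}$ gives
$$
(\partial_t-\Delta)S\le -|\nabla\Psi|^2_g-|\bar\nabla\Psi|^2_g+C\big(|T|_g+S^{1/2}\big)S^{1/2}\big(|\nabla\Psi|_g+|\bar\nabla\Psi|_g\big)+C(S+1)+2S^{1/2}|\nabla E|_g+C|E|_gS .
$$

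\emph{Step 2: absorbing errors.} By \eqref{ass3}, the term $2S^{1/2}|\nabla E|$ is at most $\tfrac14(|\nabla\Psi|^2+|\bar\nabla\Psi|^2)+C(S+1)+CS^{3/2}$. By \eqref{ass2}, $|E|S\le C(S+S^{3/2})$, and by \eqref{ass4} the torsion term is at most $\tfrac14(\dots)+CS$. The problematic terms are therefore of order $S^{3/2}$ (and $S^{1/2}\cdot S^{1/2}|\nabla\Psi|$). These must be controlled by a good term coming from the auxiliary function.

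\emph{Step 3: auxiliary trace term.} We compute $(\partial_t-\Delta){\rm tr}_{\hat g}g$. Using $\dot g=-\widetilde{\rm Ric}+E$ and the formula for $\widetilde{\rm Ric}$ above, the leading part is
$$
(\partial_t-\Delta){\rm tr}_{\hat g}g=-\hat g^{i\bar j}g^{k\bar l}g^{p\bar q}\hat\nabla_k g_{i\bar q}\hat\nabla_{\bar l}g_{p\bar j}+\dots\le -c_0S+C+C|E|+C|T|S^{1/2}\le -c_0S+CS^{1/2}+C,
$$
with $c_0>0$ depending on $K$ by \eqref{ass1}. This only gives $-S$, however, while Step 2 leaves $S^{3/2}$. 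Our first attempt is to replace the linear weight by $F=S/(B-{\rm tr}_{\hat g}g)$ or by $F=S+A\,{\rm tr}_{\hat g}g\cdot(\ldots)$. Alternatively, we could use the quantity $\varphi(S)$ in the style of Phong--Sturm. We expect that a quotient test function $S/(B-{\rm tr}_{\hat g}g)$, with $B>2n K$, yields the good term $-c_0S^2/(B-{\rm tr})^2$. It also yields gradient cross terms of the form $\langle\nabla S,\nabla{\rm tr}\rangle$, which at a maximum point reduce to $F|\nabla{\rm tr}|^2$-type terms controllable by the good term. Since $S^{3/2}\le \epsilon S^2+C_\epsilon S$, we obtain $(\partial_t-\Delta)F<0$ wherever $F$ is large. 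The maximum principle then bounds $F$ by its initial value and a constant, which gives $S\le C$.

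\emph{Main obstacle.} The hardest step is Step 1: verifying that in $(\partial_t-\Delta)S$ no uncontrolled cubic terms $\Psi\ast\Psi\ast\Psi\ast\Psi$ beyond those already accounted for survive. The non-Kähler commutations must produce only torsion terms paired with $\nabla\Psi$, which are handled by \eqref{ass4}. It is exactly here that the choice of $\widetilde{\rm Ric}$ (the second Chern--Ricci form, rather than the first) is essential, because its principal part is $\Delta$ acting on $g$ through $\hat\nabla$. We would carry out this computation in local coordinates, with $\hat g$-normal frames at a point.
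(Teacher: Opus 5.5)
\textbf{There is a genuine gap in your Steps 1--2.} Your overall scheme is the paper's. You use the quotient $S/(a-{\rm tr}_{\hat g}g)$; the paper adds $A\,{\rm tr}_{\hat g}g$, which is inessential, because for the pure quotient the gradient cross terms cancel exactly at a maximum point. You use $\Box\,{\rm tr}_{\hat g}g\le -S/K+C|\dot g+\widetilde{\rm Ric}|_g+C$ to produce the good term $-S^2/(Kf^2)$ against the $S^{3/2}/f$ terms coming from $\Box S$.

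The trouble is that your bound for $\Box S$ contains $C\,S^{1/2}\cdot S^{1/2}(|\nabla\Psi|_g+|\bar\nabla\Psi|_g)$. You flag it as problematic but never remove it, and your test function cannot absorb it. Only the coefficient $1$ in front of $-|\nabla\Psi|_g^2$ is available, so Young's inequality gives $CS|\nabla\Psi|_g\le\frac12|\nabla\Psi|_g^2+C'S^2$ with $C'$ fixed. At a maximum of $F=S/f$ you are then left with $0\le f\,\Box S+S\,\Box\,{\rm tr}_{\hat g}g\le (C'f-\frac{1}{2K})S^2+\dots$. This yields nothing unless $f=a-{\rm tr}_{\hat g}g<1/(2KC')$ at that point. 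That cannot be arranged in general, since you need $a>\sup{\rm tr}_{\hat g}g$ and you have no control on the oscillation of ${\rm tr}_{\hat g}g$. The inequality $S^{3/2}\le\epsilon S^2+C_\epsilon S$ handles only the $S^{3/2}$ terms.

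\textbf{What is missing} is the exact structure of $\Box S$ (Proposition \ref{BoxS}): no $\Psi*\Psi*\nabla\Psi$ terms occur at all. Two facts give this.
\begin{enumerate}
\item One has exactly $\nabla_{\bar r}\Psi_{ij}^k=-R_{i\bar rj}{}^k+\hat R_{i\bar rj}{}^k$. The second Bianchi identity then turns $g^{\bar ba}\nabla_aR_{i\bar bj\bar q}$ into $\nabla_i\tilde R_{j\bar q}+T*R$. Hence $\Box\Psi=\nabla(\dot g+\widetilde{\rm Ric})+T*\bar\nabla\Psi+T*\hat R+\nabla\hat R$, with no $\Psi*\nabla\Psi$ term.
\item In $\Delta S$ you must commute $\nabla_a\nabla_{\bar b}$ on $\bar\Psi$. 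Its trace with $g^{\bar ba}$ produces exactly $\widetilde{\rm Ric}*\Psi*\bar\Psi$. This combines with the $\dot g*\Psi*\bar\Psi$ terms from differentiating the metric contractions in $S$ into $(\dot g+\widetilde{\rm Ric})*\Psi*\bar\Psi$, which \eqref{ass2} bounds by $C(S+S^{3/2})$.
\end{enumerate}

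This cancellation, not the principal part, is where the second Chern--Ricci form is essential. If you expand $\widetilde{\rm Ric}$ through $\bar\nabla\Psi$ and $\hat R$, as your Step 1 does, you generate precisely the fatal $S|\bar\nabla\Psi|_g$ term. Likewise, your ``main obstacle'' should be the $\Psi*\Psi*\nabla\Psi$ terms, not quartic $\Psi$ terms.

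With these two facts one gets $\Box S\le C(1+S+S^{3/2})$ (Proposition \ref{4.3}), and your Step 3 then closes.
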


In particular if a curve of Hermitian metrics $g$ satisfies the uniform estimate \eqref{ass1} and the quantities
 $|\dot g+\tilde R|_g$, $|\nabla(\dot g+\tilde R)|_{g}$, $|T|_g$ are bounded, then $g$ satisfies a uniform $C^1$-bound.  
 
 \medskip 
As first natural application of Theorem \ref{main1} we have the following result 
\begin{theorem}\label{ThHCF}
Let $(M,\hat g)$ be a compact Hermitian manifold and let $g=g(t)$, $t\in [0,\tau)$ be a maximal-time solution of a flow belonging to the family of the HCFs. Assume $\tau <\infty$. 
Then 
$$
\limsup_{t\to \tau}\,\max \{|{\rm tr}_{g}\hat g|_{C^0(g)},|T|_{C^0(g)}\}=\infty\,.
$$
\end{theorem}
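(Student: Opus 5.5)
We argue by contradiction. Suppose $\tau<\infty$ and that the $\limsup$ in the statement is finite. Then there is a constant $K_0$ such that ${\rm tr}_g\hat g\le K_0$ and $|T|_g\le K_0$ on $M\times[0,\tau)$. The plan is to upgrade these bounds to uniform $C^\infty$ bounds on $g(t)$ up to time $\tau$. The limit $g(\tau)$ is then a smooth Hermitian metric, and short-time existence for the HCF (a strictly parabolic quasilinear system) lets us continue the flow past $\tau$, contradicting maximality.

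\textbf{Step 1: uniform equivalence \eqref{ass1}.} The bound ${\rm tr}_g\hat g\le K_0$ gives $g\ge K_0^{-1}\hat g$. For the upper bound we use the evolution of $\det g/\det\hat g$, or of ${\rm tr}_{\hat g} g$. An HCF has the form $\dot g=-S(g)+Q(T)$, where $S$ is the second Chern--Ricci curvature. Then
\[
\partial_t\log\frac{\det g}{\det \hat g}={\rm tr}_g\dot g=-{\rm tr}_g S+{\rm tr}_g Q .
\]
The term ${\rm tr}_g S$ equals the Chern scalar curvature, which is $\Delta$ of the log-volume ratio plus ${\rm tr}_g$ of the Chern--Ricci form of $\hat g$. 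The latter is bounded by $C\,{\rm tr}_g\hat g\le C$, and ${\rm tr}_g Q$ is bounded by $C|T|^2_g\le C$. The maximum principle therefore gives $\det g\le C e^{Ct}\det\hat g$. Since $\tau<\infty$, this is a uniform bound, and combined with the lower bound $g\ge K_0^{-1}\hat g$ it yields $g\le K\hat g$. So \eqref{ass1} holds with a uniform $K$.

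\textbf{Step 2: check \eqref{ass2}--\eqref{ass4} and apply Theorem \ref{main1}.} Here $\widetilde{\rm Ric}$ is the second Chern--Ricci-type curvature used in Theorem \ref{main1}, so for an HCF we have $\dot g+\widetilde{\rm Ric}=Q(T)$, a quadratic expression in the torsion, possibly up to lower-order terms involving $\hat g$, which are bounded by \eqref{ass1}. Hence $|\dot g+\widetilde{\rm Ric}|_g\le C|T|_g^2+C\le K$, which gives \eqref{ass2}. Next, $\nabla Q$ is bilinear in $T$ and $\nabla T$, so $|\nabla(\dot g+\widetilde{\rm Ric})|_g\le C|T|_g|\nabla T|_g$. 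Since $T$ is the torsion of the Chern connection of $g$, we can write $T=\hat T+\Psi$-antisymmetrization, and therefore $|\nabla T|_g\le C(|\nabla\Psi|_g+|\Psi|_g^2+1)$. The $\Psi^2$ terms come from converting $\hat\nabla\hat T$ to $\nabla\hat T$, and we bound $|\Psi|^2_g=S$. Together with $|T|_g\le K_0$, this gives \eqref{ass3}, and \eqref{ass4} is the hypothesis itself. Theorem \ref{main1} then yields $|\hat\nabla g|_g^2\le C$ on $[0,\tau)$.

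\textbf{Step 3: higher-order estimates and extension.} The HCF is a strictly parabolic quasilinear system. Given \eqref{ass1} and the Calabi bound, we invoke the higher-order regularity scheme of \cite[Section 4]{estimates} to obtain uniform $C^k$ bounds for all $k$ on $[0,\tau)$. Since $\dot g$ is then bounded in every $C^k$ norm, $g(t)$ converges smoothly to a Hermitian metric $g(\tau)$ as $t\to\tau$, which is positive definite by \eqref{ass1}. Restarting the flow from $g(\tau)$ contradicts the maximality of $\tau$.

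The main obstacle is Step 1, specifically the upper bound on $g$. We must verify that for each member of the HCF family the trace ${\rm tr}_g\dot g$ is controlled using only ${\rm tr}_g\hat g$ and $|T|_g$. If the volume-ratio argument fails because of the torsion terms in the identity relating the Chern scalar curvature to $\Delta\log\det g$, the fallback is to apply the maximum principle directly to $\log{\rm tr}_{\hat g}g$, absorbing the torsion terms via $|T|_g\le K_0$.
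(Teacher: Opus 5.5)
Your proposal is correct and is essentially the paper's proof: the volume-ratio maximum principle from Lemma \ref{lemma3} (the paper's Lemma \ref{61}(i)) together with ${\rm tr}_g\hat g\le K_0$ gives \eqref{ass1}, the identity $\dot g+\widetilde{\rm Ric}=T*T$ combined with \eqref{terza} gives \eqref{ass2}--\eqref{ass4}, and Theorem \ref{main1} plus the higher-order theory of \cite[Section 4]{estimates} contradicts maximality. There are only cosmetic points: the Chern scalar curvature is $-\Delta\log\frac{\det g}{\det\hat g}+{\rm tr}_g\hat{\rm Ric}$, with a minus sign on the Laplacian, which is what lets the maximum principle apply; since ${\rm tr}_g\widetilde{\rm Ric}={\rm tr}_g{\rm Ric}$ exactly, no torsion terms appear and your fallback is unnecessary; and because $Q$ also contains $\bar T$, $\nabla Q$ involves $\overline{\bar\nabla T}$ too, which is handled by \eqref{quarta} and the $|\bar\nabla\Psi|_g$ term already allowed in \eqref{ass3}.
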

Note that Theorem \ref{ThHCF} is consistent with \cite{S2} where it is shown that for the pluriclosed flow a bound on ${\rm tr}_{g}\hat g$ is enough to have the long time existence. 

\medskip 
The following result focuses on the second Chern-Ricci flow.  

\begin{theorem}\label{ThH=0}
Let $g=g(t)$, $t\in [0,\tau)$ be a maximal-time solution to the second Chern-Ricci flow 
$$
\dot g=-\widetilde{\rm Ric}
$$
on a compact complex manifold. 
If $\tau <\infty$, then the Chern curvature ${\rm Rm}$ of $g$ satisfies
$$
\limsup_{t\to \tau}|{\rm Rm}|_{C^0(g)}=\infty\,.
$$
\end{theorem}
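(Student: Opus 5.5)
We argue by contradiction: assume $\tau<\infty$ and $|{\rm Rm}|_{C^0(g)}\le K$ on $M\times[0,\tau)$, and show that $g(t)$ extends smoothly past $\tau$, contradicting maximality. The argument checks that the hypotheses of Theorem \ref{main1} hold along the second Chern--Ricci flow. Then the Calabi estimate, combined with the quasi-linear parabolic regularity mentioned in the introduction (cf. \cite[Section 4]{estimates}), gives uniform $C^k$ bounds on $g$. With these bounds, $g(t)$ converges smoothly as $t\to\tau$ to a Hermitian metric, from which short-time existence restarts the flow.

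\emph{Step 1: uniform equivalence \eqref{ass1}.} Since $\widetilde{\rm Ric}$ is a trace of ${\rm Rm}$, we have $|\dot g|_g=|\widetilde{\rm Ric}|_g\le C K$. Fix a vector $v$ and put $f=\log g(v,\bar v)$. Then $|\partial_t f|\le |\dot g|_g\le CK$, so $e^{-CK\tau}g(0)\le g(t)\le e^{CK\tau}g(0)$. Because $\tau<\infty$, this gives \eqref{ass1} with respect to $\hat g:=g(0)$. Conditions \eqref{ass2} and \eqref{ass3} are automatic, since $\dot g+\widetilde{\rm Ric}=0$.

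\emph{Step 2: torsion bound \eqref{ass4}.} This is the main obstacle. The plan is to compute the evolution of $T$ under $\dot g=-\widetilde{\rm Ric}$. Since $T=\partial\omega$ in local form, we get $\partial_t T=-\partial\,\widetilde{\rm Ric}$, where $\widetilde{\rm Ric}$ is read as a $(1,1)$-form. Now $\nabla\widetilde{\rm Ric}$ is a contraction of $\nabla{\rm Rm}$. The Bianchi identities for the Chern connection turn this into $\bar\nabla$-derivatives of $T$ plus terms of the form $T*{\rm Rm}$.

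The cleanest route is to derive a parabolic equation for $|T|_g^2$. We expect it to have the form
\[
(\partial_t-\Delta)|T|^2\le -|\nabla T|^2-|\bar\nabla T|^2+C|{\rm Rm}|\,|T|^2+(\text{cubic/quartic in }T)\,.
\]
The higher powers of $T$ are the danger. If they cannot be absorbed, we fall back on the bound $|\partial_t T|_g\le C|\nabla{\rm Rm}|$. This requires first a Shi-type estimate $|\nabla{\rm Rm}|\le C/\sqrt t$, obtained from the evolution of ${\rm Rm}$ and the bound on ${\rm Rm}$. That estimate is integrable in $t$, so together with Step 1 (which makes all the norms $g(t)$ comparable) it gives $|T|_g\le C$ up to $\tau$. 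We would try this second route first, because $\int_0^\tau t^{-1/2}\,dt<\infty$ makes it robust. The delicate point is establishing the Shi estimate: the evolution of ${\rm Rm}$ under the second Chern--Ricci flow contains torsion terms, so the estimate may have to be coupled with a quantity like $|\nabla{\rm Rm}|^2+A|{\rm Rm}|^2|T|^2$ on short time intervals.

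\emph{Step 3: conclusion.} With \eqref{ass1}--\eqref{ass4} established, Theorem \ref{main1} yields $|\hat\nabla g|_g^2\le C$. The flow is a quasi-linear parabolic system, and Step 1 gives ellipticity bounds, so the bootstrap of \cite[Section 4]{estimates} gives uniform $C^k$ bounds for all $k$. These bounds let $g(t)$ converge smoothly as $t\to\tau$, and the flow can be continued beyond $\tau$, which contradicts maximality. Alternatively, the conclusion follows from Theorem \ref{ThHCF}: Step 1 bounds ${\rm tr}_g\hat g$ and Step 2 bounds $|T|$, which is again a contradiction.
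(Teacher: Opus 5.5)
Your Step 2 is a genuine gap, and it is the heart of the theorem. You never establish $|T|_g\le C$. You guess that $(\partial_t-\Delta)|T|^2_g$ may contain dangerous cubic or quartic torsion terms, and then fall back on a Shi-type bound $|\nabla{\rm Rm}|_g\le C/\sqrt t$. That fallback does not close as stated. Along the second Chern--Ricci flow, the evolution of $|{\rm Rm}|^2_g$ already contains terms $T*\bar\nabla{\rm Rm}*{\rm Rm}$ (Lemma \ref{evcurv}). The evolution of $\nabla{\rm Rm}$ involves $T$ and $\nabla T$ as well. So a Shi estimate needs exactly the torsion control you are trying to prove, which is why the blow-up quantity in \cite[Theorem 1.1]{HCF} includes $|T|^2$ and $|\nabla T|$. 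The coupling with $|\nabla{\rm Rm}|^2+A|{\rm Rm}|^2|T|^2$ that you suggest is never carried out.

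The missing observation is that your first route works immediately, because no pure-torsion higher-order terms occur. We have $\partial_t T_{ij}^k=\nabla_i\dot g_j^{\,k}-\nabla_j\dot g_i^{\,k}$ and $\nabla_{\bar b}T_{ij}^k=-R_{i\bar bj}^k+R_{j\bar bi}^k$. The second Bianchi identity then gives $g^{\bar ba}\nabla_a\nabla_{\bar b}T_{ij}^k=-\nabla_i\tilde R_j^{\,k}+\nabla_j\tilde R_i^{\,k}+T*{\rm Rm}$. When $\dot g=-\widetilde{\rm Ric}$, the $\nabla\widetilde{\rm Ric}$ terms cancel exactly against $\partial_t T$. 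This is Lemma \ref{boxT}, which for $\dot g+\widetilde{\rm Ric}=0$ reduces to
\[
\square|T|^2_g=-|\nabla T|_g^2-|\bar\nabla T|_g^2+4\,{\rm Re}\,(T*\bar T*{\rm Rm})\le CK|T|^2_g .
\]
The maximum principle then gives $|T|^2_g\le \max_M|T|^2_{g(0)}\,e^{CK\tau}$.

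With this inserted, your argument is correct and differs somewhat from the paper's. The paper feeds the curvature bound into \cite[Theorem 1.1]{HCF}, so it must also bound $|\nabla T|_g$, via the evolution of $|{\rm Rm}|^2_g+|\nabla T|^2_g$. Your route combines Step 1 (uniform equivalence to $g(0)$ from $|\dot g|_g\le C|{\rm Rm}|_g$) with the $C^0$ torsion bound. It then concludes directly from Theorem \ref{ThHCF}, or from Theorem \ref{main1} plus the bootstrap, with no estimate on $\nabla T$ or $\nabla{\rm Rm}$ needed.
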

An analogue result was proved in \cite{ST} for the pluriclosed flow on surfaces. Moreover, using the Gauge equivalence between the pluriclosed flow and the generalized Ricci flow \cite[Theorem 1.1]{ST4}, \cite[Theorem 5.23]{{mariobook}} implies that in any dimension the Riemannian curvature blows up at time singularities of the pluriclosed flow.  
Theorem \ref{ThH=0} is obtained by using \cite[Theorem 1.1]{HCF} and some general evolution equations proved in Section \ref{GFormulas}.   

\medskip 
A larger class of geometric flows is given by flows satisfying the following relation 
\begin{equation}\label{moregeneral}
\dot g+\widetilde{\rm Ric}=g*\bar \nabla T+T*T\,, 
\end{equation}
(see Section \ref{applications}).  Here given two tensors $A$ and $B$ we denote, as usual, by $A*B$ any tensor which is a linear combination
of tensors formed by starting with $A\otimes B$ and then contracting by using the metric $g$. 
Theorem \ref{main1} implies a Calabi estimate for flows satisfying  \eqref{moregeneral}  when we impose a better control on the torsion: 

\begin{prop}\label{2cor}
Let $g=g(t)$ be a smooth curve of Hermitian metrics on a compact Hermitian manifold $(M,\hat g)$. Assume that  $g$ satisfies \eqref{ass1}, \eqref{moregeneral} and 
\begin{equation}\label{Tcond}
|T|_g\leq K\,,\quad \left|\bar{\hat \nabla} T \right|_g\leq K\,,\quad \left|\hat \nabla\bar {\hat \nabla} T \right|_g\leq K
\end{equation}
for a uniform constant $K$. 
Then there exists a positive constant $C$ depending on $K$ and $\hat g$ such that 
$$
|\hat \nabla g|_{g}^2\leq C\,. 
$$
\end{prop}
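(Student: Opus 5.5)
The plan is to reduce Proposition \ref{2cor} to Theorem \ref{main1}. Assumptions \eqref{ass1} and \eqref{ass4} are given directly. So it is enough to check that $A:=\dot g+\widetilde{\rm Ric}=g*\bar\nabla T+T*T$ satisfies \eqref{ass2} and \eqref{ass3} with a constant depending only on $K$ and $\hat g$.

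The basic device is to trade Chern derivatives of $g$ for derivatives of $\hat g$ using $\Psi=\nabla-\hat\nabla$. Since $\Psi$ is of type $(1,0)$, we have
$$
\bar\nabla T=\bar{\hat\nabla}T+\bar\Psi*T .
$$
Similarly, $\nabla\bar\nabla T$ equals $\hat\nabla\bar{\hat\nabla}T$ plus terms of the form $\Psi*\bar{\hat\nabla}T$, $\nabla\bar\Psi*T$ and $\Psi*\bar\Psi*T$.

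\emph{Verifying \eqref{ass2}.} Here $|A|_g\le C(|\bar{\hat\nabla}T|_g+|\Psi|_g|T|_g+|T|_g^2)$, because $g$ has unit norm in $g$ and contractions with $g$ are bounded. By \eqref{Tcond} this is at most $C(1+S^{1/2})$.

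\emph{Verifying \eqref{ass3}.} We have $\nabla g=0$, so $\nabla A=\nabla\bar\nabla T*g+\nabla T*T$. For the second term, $\nabla T=\hat\nabla T+\Psi*T$. This is where I expect the main obstacle: $\hat\nabla T$ is not among the hypotheses \eqref{Tcond}. I would handle it by writing $T$ in terms of $g$, namely $T=\hat T+\Psi*1$ up to the identification $T^k_{ij}=\Gamma^k_{ij}-\Gamma^k_{ji}$, so that $T-\hat T$ is the antisymmetrization of $\Psi$. Then
$$
\hat\nabla T=\hat\nabla\hat T+\hat\nabla\Psi ,\qquad \hat\nabla\Psi=\nabla\Psi+\Psi*\Psi .
$$
This gives $|\nabla T|_g\le C(1+|\nabla\Psi|_g+S)$, using \eqref{ass1} to bound $\hat\nabla\hat T$ and $\hat T$ in $g$-norm. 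Multiplying by the bounded $|T|_g$ gives a contribution admissible for \eqref{ass3}.

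For the first term I expand
$$
\nabla\bar\nabla T=\hat\nabla\bar{\hat\nabla}T+\Psi*\bar{\hat\nabla}T+\nabla\bar\Psi*T+\Psi*\bar\Psi*T .
$$
The pieces are bounded as follows, using \eqref{Tcond}:
\begin{itemize}
\item $\hat\nabla\bar{\hat\nabla}T$ is bounded by $K$;
\item $\Psi*\bar{\hat\nabla}T$ is bounded by $CS^{1/2}$;
\item $\nabla\bar\Psi=\overline{\bar\nabla\Psi}$ has norm $|\bar\nabla\Psi|_g$, so $\nabla\bar\Psi*T$ is bounded by $C|\bar\nabla\Psi|_g$;
\item $\Psi*\bar\Psi*T$ is bounded by $CS$.
\end{itemize}
Altogether $|\nabla A|_g\le C(|\nabla\Psi|_g+|\bar\nabla\Psi|_g+1+S)$, which is exactly \eqref{ass3}.

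Theorem \ref{main1} then yields $|\hat\nabla g|_g^2\le C$. A priori that constant also depends on $g(0)$. To remove this dependence, as the statement requires, note that $S(0)$ is controlled by $K$ and $\hat g$: I would check this by inspecting the maximum-principle argument of Theorem \ref{main1}, or else simply accept that dependence. The delicate bookkeeping is thus the conversion between $\hat\nabla$ and $\nabla$ derivatives of the torsion, and in particular the treatment of $\hat\nabla T$ through $\nabla\Psi$.
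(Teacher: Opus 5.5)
Your overall plan (verify \eqref{ass2} and \eqref{ass3}, then invoke Theorem \ref{main1}) is the paper's, and your bounds for \eqref{ass2} and for the $\nabla T*T$ term are fine. The gap is in your expansion of $\nabla\bar\nabla T$. Start from your own formula $\bar\nabla T=\bar{\hat\nabla}T+\bar\Psi*T$. The Leibniz rule gives $\nabla(\bar\Psi*T)=\nabla\bar\Psi*T+\bar\Psi*\nabla T$, and $\bar\Psi*\nabla T=\bar\Psi*\hat\nabla T+\bar\Psi*\Psi*T$. You kept the last piece but silently dropped $\bar\Psi*\hat\nabla T$. This, not $\nabla T*T$, is where $\hat\nabla T$ really causes trouble. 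With the only bound you have, $|\hat\nabla T|_g\le C(1+|\nabla\Psi|_g+S)$, this term is controlled only by $C(S^{1/2}+S^{1/2}|\nabla\Psi|_g+S^{3/2})$. That is not of the form allowed in \eqref{ass3}, so Theorem \ref{main1} could not be applied. The paper does keep this term: it is the $S^{1/2}|\hat\nabla T|_g$ in \eqref{quinta}. It then needs an extra estimate $|\hat\nabla T|_g\le C(S^{1/2}+1)$, which it derives from $T=g*\partial\omega$.

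The simplest repair is to observe that the term $\bar\Psi*T$ is not there at all. The tensor $T^k_{ij}$ has only $(1,0)$-type indices. On such tensors, in holomorphic coordinates, the $(0,1)$-part of the Chern connection of any Hermitian metric is just $\partial_{\bar l}$ componentwise. The same fact underlies \eqref{for} and the identity $\nabla_{\bar b}T^k_{ij}=-R^k_{i\bar bj}+R^k_{j\bar bi}$ used in the paper. Hence $\bar\nabla T=\bar{\hat\nabla}T$ exactly. Moreover, $\nabla_p$ and $\hat\nabla_p$ both act as $\partial_p$ on the barred slot. Therefore $\nabla\bar\nabla T=\hat\nabla\bar{\hat\nabla}T+\Psi*\bar{\hat\nabla}T$, and $|\nabla\bar\nabla T|_g\le C(1+S^{1/2})$ follows directly from \eqref{Tcond}, with no $\bar\nabla\Psi$ and no $\hat\nabla T$. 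With this correction your argument is complete, and it avoids the paper's auxiliary bound on $\hat\nabla T$ altogether. That bound's derivation needs care anyway, since differentiating the components of $\partial\omega$ produces second derivatives of $g$.

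Finally, of your two options for the constant, only the second is viable. $S(0)$ is not controlled by $K$ and $\hat g$: if $g(0)$ and $\hat g$ are K\"ahler, then $T(0)=0$ and \eqref{Tcond} says nothing at $t=0$, while $|\hat\nabla g(0)|_{g(0)}$ can be arbitrarily large with \eqref{ass1} fixed. So the dependence on $g(0)$ inherited from Theorem \ref{main1} must be kept, exactly as the paper's proof implicitly does.
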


Note that if the torsion of $g(t)$ is constant in time, then conditions \eqref{Tcond} are in particular satisfied and Corollary \ref{2cor} can be applied. This in particular happens when $d\dot\omega(t)=0$, as in the case of the Chern-Ricci flow.


\medskip 
\noindent 
\newline 
{\bf Acknowledgments:}
The authors would like to thank Jeffrey Streets for his helpful comments on an earlier version of the paper.

\section{Preliminaries}
Let $(M,g)$ be a Hermitian manifold. Fix local complex coordinates and denote by $g_{r\bar s}$ components of $g$. Let  
 $\nabla$ be the
Chern connection of $g$. Then the Christoffel symbols of $\nabla$ are given by 
$$
\Gamma_{ij}^k=g^{\bar rk}g_{j	\bar r,i}\,,
$$
and the torsion tensor $T$ of $\nabla$ locally reads as 
$$
T_{ij}^k=g^{\bar rk}g_{j	\bar r,i}-g^{\bar rk}g_{i\bar r,j}\,.
$$
Moreover, the components of the curvature tensor ${\rm Rm}$ of $\nabla$ are given by
\begin{equation*}\label{tildeR}
R_{k\bar l r	\bar s  }=-g_{u\bar s}\nabla_{\bar l}\Gamma_{kr}^{u}=-g_{r \bar s,k\bar l}+g^{\bar b a}g_{a\bar s,\bar l }g_{r	\bar b,k}
\end{equation*} 
and the components of the  
{\em first} and the {\em second Chern-Ricci tensor} ${\rm Ric}$ and $\widetilde{{\rm Ric}}$ are respectively
given by
$$
 R_{r\bar s}=g^{\bar lk}R_{r\bar sk\bar l }\,,\quad \mbox{ and }\quad  \tilde R_{r\bar s}=g^{\bar lk}R_{k\bar l r\bar s}\,.
 $$
In the non-K\"ahler case the two tensors ${\rm Ric}$ and $\widetilde{{\rm Ric}}$ do not in general agree and we have the following 
formula
\begin{equation}\label{R-R}
R_{i\bar j}-\tilde{R}_{i\bar j}=g^{\bar lk}\left(\nabla_{\bar l}T_{ki\bar j}-\nabla_iT_{\bar l\bar jk}\right)\,,
\end{equation}
see e.g. \cite[Lemma 2.4]{HCF}. 

\medskip 
We further recall the definition of some geometric flows of  Hermitian metrics studied in the literature and already mentioned in the introduction. The {\em Hermitian curvature flows} 
introduced by Streets and Tian are defined by the following expression
\begin{equation*}\label{HCF}
\partial_t g=-\widetilde{\rm Ric}+Q
\end{equation*}
where $Q$ is quadratic in the torsion of $g$. The flows are distinguished by the choice of $Q$ which, a priori, could be 
arbitrary. 
The following flows belong to this family:

\medskip \noindent 
the {\em pluriclosed flow} (PCF) \cite{ST}
$$
\partial_t g_{r	\bar s}=-\tilde R_{r	\bar s}+g^{\bar ba}g^{\bar lm}T_{ra	\bar l}T_{\bar s\bar b m };
$$
the {\em Positive Hermitian curvature flow} (HCF$_{+}$) \cite{YU1}
$$
\partial_t g_{r	\bar s}=-\tilde R_{r	\bar s}-\frac12 g^{\bar ba}g^{\bar lm} T_{a	 m\bar s}T_{\bar b \bar l r};
$$ 

\smallskip 
\noindent  the {\em Second  Chern-Ricci flow} \cite{Lee}
$$
\partial_t g_{r	\bar s}=-\tilde R_{r\bar s}\,. 
$$ 
The PCF preserves the pluriclosed condition $\partial \bar\partial \omega=0$, while the  {\rm HCF}$_+$ improve the positivity of the  holomorphic bisectional curvature \cite[Theorem 01]{YU1}. 

In \cite{TW} Tosatti and Weinkove introduced the {\em Chern-Ricci flow}
$$
\partial_t g=-{\rm Ric}\,,
$$ 
while in \cite{AF8} and \cite{AF7}  Phong, Picard and Zhang  introduced the {\em Type IIB flow} 
$$
\partial_t(|\Omega|_{\omega}\omega^{n-1})=i\partial \bar \partial \omega^{n-2}-\psi
$$
where $\Omega$ is a holomorphic volume form and 
$\psi \in \Lambda^{n-1,n-1}_{\mathbb R}$ is closed. When $\psi=0$ the flow is called {\em anomaly flow}. The flow preserves the {\em conformally balanced condition } 
$$
d(|\Omega|_{\omega}\omega^{n-1})=0
$$ and under this assumption the metric $g$ induced by the form $|\Omega|_{\omega}\,\omega$ solves 
\begin{equation}\label{typeIIB}
\partial_t g_{r\bar s}=-\tilde R_{r\bar s}-\frac12 g^{\bar ba}g^{\bar lm} T_{a	 m\bar s}T_{\bar b \bar l r}+\Phi_{r\bar s}\,,
\end{equation}
where $\Phi\in \Lambda^{1,1}_{\mathbb R}$ involves combinations of the components of $	\psi$ and $g$ \cite{AF9}. In particular when $\psi=0$, flow \eqref{typeIIB} is the positive anomaly flow.

\section{General Formulas}\label{GFormulas}
Let $g=g(t)$ be a smooth curve of Hermitian metric on a compact Hermitian manifold $(M,\hat g)$. Let $\nabla$ and $\hat \nabla$ be the Chern connection of $g$ and $\hat g$, respectively, and 
$$
S:=|\hat \nabla g|_{g}^2\,. 
$$
Let us denote by  
$$
\Psi=\nabla-\hat \nabla\,,
$$
in order to write 
$$
S=|\Psi|_{g}^2\,. 
$$
\begin{lemma}
We have 
$$
T_{ij}^k-\hat T_{ij}^k=\Psi_{ij}^k-\Psi_{ji}^k\,. 
$$
\end{lemma}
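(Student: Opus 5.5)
This identity is a purely local, pointwise statement. I would prove it directly from the coordinate formulas recalled in the Preliminaries. The plan is to express both torsion tensors through the Christoffel symbols of the respective Chern connections and then subtract.

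First I fix local holomorphic coordinates. I note that both $\nabla$ and $\hat\nabla$ are Chern connections, so their only nonvanishing Christoffel symbols are the unmixed ones: $\Gamma_{ij}^k = g^{\bar r k} g_{j\bar r,i}$ and $\hat\Gamma_{ij}^k = \hat g^{\bar r k}\hat g_{j\bar r,i}$, together with their conjugates. Consequently the difference $\Psi=\nabla-\hat\nabla$, which is a tensor, has components
$$
\Psi_{ij}^k=\Gamma_{ij}^k-\hat\Gamma_{ij}^k ,
$$
with the conjugate components determined by conjugation and all mixed components zero.

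Next I use the formula for the torsion recalled above, $T_{ij}^k=g^{\bar rk}g_{j\bar r,i}-g^{\bar rk}g_{i\bar r,j}$. This can be rewritten as $T_{ij}^k=\Gamma_{ij}^k-\Gamma_{ji}^k$, and in the same way $\hat T_{ij}^k=\hat\Gamma_{ij}^k-\hat\Gamma_{ji}^k$. Subtracting the two expressions gives
$$
T_{ij}^k-\hat T_{ij}^k=(\Gamma_{ij}^k-\hat\Gamma_{ij}^k)-(\Gamma_{ji}^k-\hat\Gamma_{ji}^k)=\Psi_{ij}^k-\Psi_{ji}^k ,
$$
which is the claim. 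Both sides are tensors, so the identity, checked in arbitrary holomorphic coordinates, holds globally.

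No step is a genuine obstacle. The only point requiring care is the index convention: one must check that the convention for $\Psi_{ij}^k$ (derivative index first) matches the one used in $\Gamma_{ij}^k=g^{\bar rk}g_{j\bar r,i}$, so that the signs and the order of $i,j$ come out as stated.
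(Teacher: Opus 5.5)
Your proposal is correct and follows the paper's own argument: write $T_{ij}^k=\Gamma_{ij}^k-\Gamma_{ji}^k$ and $\hat T_{ij}^k=\hat\Gamma_{ij}^k-\hat\Gamma_{ji}^k$, then subtract using $\Psi_{ij}^k=\Gamma_{ij}^k-\hat\Gamma_{ij}^k$. The index convention you flag is the right one: with $\Gamma_{ij}^k=g^{\bar rk}g_{j\bar r,i}$, the torsion formula is exactly $\Gamma_{ij}^k-\Gamma_{ji}^k$.
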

\begin{proof}
We directly compute 
$$
T_{ij}^k-\hat T_{ij}^k=\Gamma_{ij}^k-\Gamma_{ji}^k-\hat \Gamma_{ij}^k+\hat \Gamma_{ji}^k=\Psi_{ij}^k-\Psi_{ji}^k\,,
$$
and the claim follows. 
\end{proof}
Furthermore we denote by 
$$
\Delta:=g^{\bar s r}\partial_{r}\partial_{\bar s}
$$
the Chern Laplacian of $g$ and we set 
$$
\square:=\partial_t-\Delta\,.
$$
\begin{prop}\label{BoxS}
We have 
$$
\begin{aligned}
\Box S=& -|\nabla \Psi|_g^2-|\bar \nabla \Psi|_g^2+2{\rm Re}(g^{\bar mi}g^{\bar pj}\nabla_i(\dot g_{j\bar q}+\tilde R_{j\bar q})\Psi_{\bar m\bar p}^{\bar q})\\
&+g^{\bar mi}g^{\bar pj}\Psi_{ij}^k(\dot g_{k\bar q}+\tilde R_{k\bar q})\Psi_{\bar u\bar p}^{\bar q}
+g_{k\bar q}g^{\bar mi}g^{\bar pj}\Psi_{ij}^k(\dot g_{\bar p }^{\bar u}+\tilde R_{\bar p }^{\bar u})\Psi^{\bar q}_{\bar m\bar u}\\
&-g_{k\bar q}g^{\bar mi}g^{\bar pj}\Psi_{ij}^k(\dot g_{\bar u }^{\bar q}+\tilde R_{\bar u }^{\bar q})\Psi^{\bar u}_{\bar m\bar p}
+2{\rm Re}\,\left(g^{\bar mi}g^{\bar pj}g^{\bar ba}T_{ai}^k\nabla_{\bar b}\Psi_{kj\bar q}\Psi_{\bar m\bar p}^{\bar q}\right)\\
&-2 {\rm Re}\,\left(g^{\bar mi}g^{\bar pj}g^{\bar ba}T_{ai}^k\hat R_{k\bar bj\bar q}\Psi_{\bar m\bar p}^{\bar q}\right)-2{\rm Re}\left(g^{\bar mi}g^{\bar pj}g^{\bar ba}\nabla_a\hat R_{i\bar bj	\bar q}\Psi_{\bar m\bar p}^{\bar q}\right)\,.
\end{aligned}
$$
\end{prop}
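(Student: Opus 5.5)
Direct computation of $\partial_t S$ and $\Delta S$, with $S=g^{\bar m i}g^{\bar p j}g_{k\bar q}\Psi_{ij}^k\overline{\Psi_{mp}^q}$, then combination, in the spirit of the Calabi computations in \cite{SW2,estimates}. Since $\Box$ acts on $S$ through the metric contractions and through $\Psi$, the work splits into three steps: (i) the time derivative of $\Psi$, (ii) the Laplacian of $\Psi$, and (iii) assembling the terms.

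\emph{Time derivative.} Since $\hat\Gamma$ is time independent, $\partial_t\Psi_{ij}^k=\partial_t\Gamma_{ij}^k=\nabla_i(g^{\bar q k}\dot g_{j\bar q})$. Differentiating the three inverse/metric factors in $S$ gives terms of the form $\Psi*\dot g*\bar\Psi$, namely $-\dot g^{\bar m i}$, $-\dot g^{\bar p j}$ and $+\dot g_{k\bar q}$ contractions. These are exactly the three middle lines of the formula, with $\dot g$ alone in place of $\dot g+\tilde R$.

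\emph{Laplacian.} I write $\Delta S=g^{\bar b a}\nabla_a\nabla_{\bar b}S$, legitimate on functions for the Chern connection. Expanding gives $|\nabla\Psi|^2+|\bar\nabla\Psi|^2+2{\rm Re}\langle g^{\bar b a}\nabla_a\nabla_{\bar b}\Psi,\Psi\rangle$, with $\nabla\Psi$ and $\bar\nabla\Psi$ arising from the appropriate pairings. The key identity is the computation of $\nabla_{\bar b}\Psi_{ij}^k$. Because $\Gamma$ and $\hat\Gamma$ are holomorphic-type Christoffel symbols, $\partial_{\bar b}\Gamma_{ij}^k$ is minus the Chern curvature of $g$ and $\partial_{\bar b}\hat\Gamma_{ij}^k$ is minus that of $\hat g$. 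This yields
$$\nabla_{\bar b}\Psi_{ij}^{k}=-R_{i\bar b j}{}^{k}+\hat R_{i\bar b j}{}^{k}.$$
Applying $g^{\bar b a}\nabla_a$ to this, I must convert $g^{\bar b a}\nabla_a R_{i\bar b j\bar q}$ into $\nabla_i\tilde R_{j\bar q}$ using the Bianchi identity with torsion, $\nabla_aR_{i\bar b j\bar q}-\nabla_iR_{a\bar b j\bar q}=T_{ai}^k R_{k\bar b j\bar q}$. The first index is swapped and the $\bar b$ slot becomes contracted into $\tilde R$, which produces $-\nabla_i\tilde R_{j\bar q}$ plus a torsion term $T*R$. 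I then rewrite $R=\hat R-\nabla_{\bar b}\Psi$ inside the torsion term, producing the stated terms $T_{ai}^k\nabla_{\bar b}\Psi_{kj\bar q}$ and $-T_{ai}^k\hat R_{k\bar b j\bar q}$. The remaining $\hat R$-term gives $g^{\bar b a}\nabla_a\hat R_{i\bar b j\bar q}$. The commutation $\nabla_a\nabla_{\bar b}$ versus $\nabla_{\bar b}\nabla_a$, applied to the conjugate factor, produces curvature contractions on $\bar\Psi$. Contracted appropriately, these give exactly the $\tilde R$ terms in the middle lines (the $\tilde R_{k\bar q}$, $\tilde R^{\bar u}_{\bar p}$, $\tilde R^{\bar q}_{\bar u}$ contractions). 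Here the $R^{\bar b a}$-trace over the first pair is $\tilde R$ because we contract over the derivative directions.

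\emph{Combining.} Subtracting then pairs each $\dot g$ term with a $\tilde R$ term into $\dot g+\tilde R$, and the gradient term $\nabla_i\dot g$ with $\nabla_i\tilde R$.

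I expect the main obstacle to be bookkeeping. One issue is choosing the order of the derivatives $\nabla_a\nabla_{\bar b}$ on $\Psi$ versus $\bar\Psi$ so that the curvature traces come out as $\tilde R$ rather than ${\rm Ric}$, with the right signs. The other is using the correct torsion Bianchi identity, that is, checking which index the torsion contracts with, so that the torsion terms take exactly the stated form with $\nabla_{\bar b}\Psi$ and $\hat R$. I would first verify the identity $\nabla_{\bar b}\Psi=\hat R-R$ and the Bianchi identity in local coordinates, then carry out the expansion carefully.
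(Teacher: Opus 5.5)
Your plan follows the paper's proof step for step. The steps are $\partial_t\Psi_{ij}^k=\nabla_i\dot g_j^k$ and $\Delta S=g^{\bar ba}\nabla_a\nabla_{\bar b}S$. Then $\nabla_{\bar b}\Psi_{ij}^k=-R_{i\bar bj}{}^k+\hat R_{i\bar bj}{}^k$, and commuting $\nabla_a\nabla_{\bar b}$ on the conjugate factor produces the $\tilde R$-contractions. The torsion Bianchi identity turns $g^{\bar ba}\nabla_aR_{i\bar bj\bar q}$ into $\nabla_i\tilde R_{j\bar q}$, and finally $R=\hat R-\nabla_{\bar b}\Psi$ is substituted inside the resulting $T*R$ term.

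The one step that is wrong as written is the sign of your Bianchi identity. Take the paper's conventions $T_{ai}^k=\Gamma_{ai}^k-\Gamma_{ia}^k$ and $R_{i\bar bj}{}^l=-\partial_{\bar b}\Gamma_{ij}^l$. Check in coordinates, using only two facts: the $(2,0)$-part of the Chern curvature vanishes, i.e. $\partial_a\Gamma_{ij}^l-\partial_i\Gamma_{aj}^l=\Gamma_{is}^l\Gamma_{aj}^s-\Gamma_{as}^l\Gamma_{ij}^s$, and $\nabla_a$ has no Christoffel correction on barred slots. This gives
\[
\nabla_aR_{i\bar bj\bar q}-\nabla_iR_{a\bar bj\bar q}=-T_{ai}^kR_{k\bar bj\bar q},
\]
which is the form the paper uses. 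With your $+T_{ai}^k$, $\Box S$ would contain $+g^{\bar mi}g^{\bar pj}g^{\bar ba}T_{ai}^kR_{k\bar bj\bar q}\Psi^{\bar q}_{\bar m\bar p}$ (plus its conjugate). Substituting $R_{k\bar bj\bar q}=\hat R_{k\bar bj\bar q}-\nabla_{\bar b}\Psi_{kj\bar q}$ would then give both torsion terms with the opposite signs to the ones you claim to obtain, and to the statement. This is harmless for Proposition~\ref{lemma4.2}, which only uses absolute values, but it matters for the identity itself.

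Two smaller remarks. First, the cross terms in $\Delta S$ are not $2\,\mathrm{Re}\langle g^{\bar ba}\nabla_a\nabla_{\bar b}\Psi,\Psi\rangle$. The second cross term involves $\nabla_{\bar b}\nabla_a\Psi$, and the difference is exactly the commutator you invoke afterwards, so phrase it that way. Second, your signs from the time derivative are the correct ones: $+$ for the $\dot g_{k\bar q}$ slot and $-$ for the $\bar m$ and $\bar p$ slots. The middle lines of the printed statement contain sign and index misprints, so ``exactly the three middle lines'' is true only of the corrected formula.
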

\begin{proof}
We have
$$
\Psi_{ij}^k=\Gamma_{ij}^k-\hat \Gamma_{ij}^k\,,
$$
and
\begin{equation}\label{for}
\nabla_{\bar r}\Psi_{ij}^k=-R_{i\bar r j}^k+\hat R_{i\bar r j}^k\,,
\end{equation}
and 
$$
S=g_{k\bar q}g^{\bar mi}g^{\bar pj}\Psi_{ij}^k\Psi_{\bar m\bar p}^{\bar q}=g^{\bar mi}g^{\bar pj}g^{\bar r k}\Psi_{j\bar ri}\Psi_{k\bar p\bar m}\,.
$$
We directly compute
$$
\begin{aligned}
\partial_t S=&\dot g_{k\bar q}g^{\bar mi}g^{\bar pj}\Psi_{ij}^k \Psi_{\bar m\bar p}^{\bar q}
-g_{k\bar q}\dot g_{\bar b}^{\bar m}g^{\bar bi}g^{\bar pj}\Psi_{ij}^k \Psi_{\bar m\bar p}^{\bar q}
-g_{k\bar q}g^{\bar mi}\dot g_{\bar b}^{\bar p}g^{\bar bj}\Psi_{ij}^k \Psi_{\bar m\bar p}^{\bar q}\\
&+g_{k\bar q}g^{\bar mi}g^{\bar pj}\dot \Psi_{ij}^k\Psi_{\bar m\bar p}^{\bar q}+g_{k\bar q}g^{\bar mi}g^{\bar pj}\Psi_{ij}^k\dot \Psi_{\bar m\bar p}^{\bar q}\,.
\end{aligned}
$$
Since 
$$
\dot \Psi_{ij}^k=\partial_t\Gamma_{ij}^k=\partial_t(g^{\bar rk}g_{j	\bar r,i})=-g^{\bar ra}\dot g_{a\bar b}g^{\bar bk}g_{j	\bar r,i}+ g^{\bar rk}\dot g_{j	\bar r,i}=g^{\bar bk}(\Gamma_{ij}^a\dot g_{a\bar b}+g_{j	\bar b,i})=\nabla_i\dot g_{j}^k
$$
we can write 
$$
\begin{aligned}
\partial_t S=&\dot g_{k\bar q}g^{\bar mi}g^{\bar pj}\Psi_{ij}^k \Psi_{\bar m\bar p}^{\bar q}
-g_{k\bar q}\dot g_{\bar b}^{\bar m}g^{\bar bi}g^{\bar pj}\Psi_{ij}^k \Psi_{\bar m\bar p}^{\bar q}
-g_{k\bar q}g^{\bar mi}\dot g_{\bar b}^{\bar p}g^{\bar bj}\Psi_{ij}^k \Psi_{\bar m\bar p}^{\bar q}\\
&+g_{k\bar q}g^{\bar mi}g^{\bar pj}\nabla_i\dot g_{j}^k\Psi_{\bar m\bar p}^{\bar q}+g_{k\bar q}g^{\bar mi}g^{\bar pj}\Psi_{ij}^k\nabla_{\bar m}\dot g_{\bar p}^{\bar q}\,.
\end{aligned}
$$
Moreover 
$$
\nabla_{\bar b}S=g_{k\bar q}g^{\bar mi}g^{\bar pj}\nabla_{\bar b}\Psi_{ij}^k\Psi_{\bar m\bar p}^{\bar q}+g_{k\bar q}g^{\bar mi}g^{\bar pj}\Psi_{ij}^k\nabla_{\bar b}\Psi_{\bar m\bar p}^{\bar q}\,,
$$
and 
$$
\begin{aligned}
\Delta S=&\, g^{\bar ba}\nabla_a\nabla_{\bar b}S=
g_{k\bar q}g^{\bar mi}g^{\bar pj}g^{\bar ba}(\nabla_a\nabla_{\bar b}\Psi_{ij}^k)\Psi_{\bar m\bar p}^{\bar q}
+g_{k\bar q}g^{\bar mi}g^{\bar pj}\Psi_{ij}^kg^{\bar ba}(\nabla_a\nabla_{\bar b}\Psi_{\bar m\bar p}^{\bar q})\\
&\,+g_{k\bar q}g^{\bar mi}g^{\bar pj}\,g^{\bar ba}\nabla_{\bar b}\Psi_{ij}^k\nabla_a\Psi_{\bar m\bar p}^{\bar q}+g_{k\bar q}g^{\bar mi}g^{\bar pj}g^{\bar ba}\nabla_a\Psi_{ij}^k\nabla_{\bar b}\Psi_{\bar m\bar p}^{\bar q}\\
= &|\nabla \Psi|_g^2+|\bar \nabla \Psi|_g^2+g_{k\bar q}g^{\bar mi}g^{\bar pj}g^{\bar ba}(\nabla_a\nabla_{\bar b}\Psi_{ij}^k)
\Psi_{\bar m\bar p}^{\bar q}+g_{k\bar q}g^{\bar mi}g^{\bar pj}\Psi_{ij}^k g^{\bar ba}(\nabla_a\nabla_{\bar b}\Psi_{\bar m\bar p}^{\bar q})\,.
\end{aligned}
$$
Therefore using \eqref{for} we get 
$$
\begin{aligned}
\Delta S
=& \,|\nabla \Psi|_g^2+|\bar \nabla \Psi|_g^2
-g^{\bar mi}g^{\bar pj}g^{\bar ba}\nabla_aR_{i\bar bj	\bar q}\Psi_{\bar m\bar p}^{\bar q}-g^{\bar mi}g^{\bar pj}\Psi_{ij}^kg^{\bar ba}\nabla_{\bar b}R_{\bar ma\bar pk}\\
&-g_{k\bar q}g^{\bar mi}g^{\bar pj}\Psi_{ij}^kg^{\bar ba}R_{a\bar b\bar m }^{\bar u}\Psi_{\bar u\bar p}^{\bar q}
-g_{k\bar q}g^{\bar mi}g^{\bar pj}\Psi_{ij}^kg^{\bar ba}R_{a\bar b\bar p }^{\bar u}\Psi^{\bar q}_{\bar m\bar u}\\
&+g_{k\bar q}g^{\bar mi}g^{\bar pj}\Psi_{ij}^kg^{\bar ba}R_{a\bar b\bar u }^{\bar q}\Psi^{\bar u}_{\bar m\bar p}\\
&+g^{\bar mi}g^{\bar pj}g^{\bar ba}\nabla_a\hat R_{i\bar bj	\bar q}\Psi_{\bar m\bar p}^{\bar q}+g^{\bar mi}g^{\bar pj}\Psi_{ij}^kg^{\bar ba}\nabla_{\bar b}\hat R_{\bar ma\bar pk}\\
=& \,|\nabla \Psi|_g^2+|\bar \nabla \Psi|_g^2-g^{\bar mi}g^{\bar pj}g^{\bar ba}\nabla_aR_{i\bar bj	\bar q}\Psi_{\bar m\bar p}^{\bar q}
-g^{\bar mi}g^{\bar pj}\Psi_{ij}^kg^{\bar ba}\nabla_{\bar b}R_{\bar m a\bar pk}\\
&+g_{k\bar q}g^{\bar mi}g^{\bar pj}\Psi_{ij}^k\tilde R_{\bar m }^{\bar u}\Psi_{\bar u\bar p}^{\bar q}
+g_{k\bar q}g^{\bar mi}g^{\bar pj}\Psi_{ij}^k\tilde R_{\bar p }^{\bar u}\Psi^{\bar q}_{\bar m\bar u}
-g_{k\bar q}g^{\bar mi}g^{\bar pj}\Psi_{ij}^k\tilde R_{\bar u }^{\bar q}\Psi^{\bar u}_{\bar m\bar p}\\
&+g^{\bar mi}g^{\bar pj}g^{\bar ba}\nabla_a\hat R_{i\bar bj	\bar q}\Psi_{\bar m\bar p}^{\bar q}+g^{\bar mi}g^{\bar pj}\Psi_{ij}^kg^{\bar ba}\nabla_{\bar b}\hat R_{\bar ma\bar pk}
\end{aligned}
$$
and the second Bianchi identity implies 
$$
\begin{aligned}
\Delta S
=& \,|\nabla \Psi|_g^2+|\bar \nabla \Psi|_g^2-g^{\bar mi}g^{\bar pj}g^{\bar ba}\nabla_iR_{a\bar bj	\bar q}\Psi_{\bar m\bar p}^{\bar q}
+g^{\bar mi}g^{\bar pj}g^{\bar ba}T_{ai}^kR_{k\bar bj\bar q}\Psi_{\bar m\bar p}^{\bar q}\\
&-g^{\bar mi}g^{\bar pj}\Psi_{ij}^kg^{\bar ba}\nabla_{\bar m}R_{a \bar b k \bar p}+g^{\bar mi}g^{\bar pj}g^{\bar ba}T_{\bar b\bar m}^{\bar s}R_{\bar s a \bar p k}\Psi_{ij}^k\\
&+g_{k\bar q}g^{\bar mi}g^{\bar pj}\Psi_{ij}^k\tilde R_{\bar m }^{\bar u}\Psi_{\bar u\bar p}^{\bar q}
+g_{k\bar q}g^{\bar mi}g^{\bar pj}\Psi_{ij}^k\tilde R_{\bar p }^{\bar u}\Psi^{\bar q}_{\bar m\bar u}
-g_{k\bar q}g^{\bar mi}g^{\bar pj}\Psi_{ij}^k\tilde R_{\bar u }^{\bar q}\Psi^{\bar u}_{\bar m\bar p}\\
&+g^{\bar mi}g^{\bar pj}g^{\bar ba}\nabla_a\hat R_{i\bar bj	\bar q}\Psi_{\bar m\bar p}^{\bar q}+g^{\bar mi}g^{\bar pj}\Psi_{ij}^kg^{\bar ba}\nabla_{\bar b}\hat R_{\bar ma\bar pk}
\end{aligned}
$$
and 
$$
\begin{aligned}
\Delta S
=& \,|\nabla \Psi|_g^2+|\bar \nabla \Psi|_g^2-g^{\bar mi}g^{\bar pj}\nabla_i\tilde R_{j\bar q}\Psi_{\bar m\bar p}^{\bar q}
+g^{\bar mi}g^{\bar pj}g^{\bar ba}T_{ai}^kR_{k\bar bj\bar q}\Psi_{\bar m\bar p}^{\bar q}\\
&-g^{\bar mi}g^{\bar pj}\Psi_{ij}^k\nabla_{\bar m}\tilde R_{k \bar p}+g^{\bar mi}g^{\bar pj}g^{\bar ba}T_{\bar b\bar m}^{\bar s}R_{\bar s a \bar p k}\Psi_{ij}^k\\
&+g_{k\bar q}g^{\bar mi}g^{\bar pj}\Psi_{ij}^k\tilde R_{\bar m }^{\bar u}\Psi_{\bar u\bar p}^{\bar q}
+g_{k\bar q}g^{\bar mi}g^{\bar pj}\Psi_{ij}^k\tilde R_{\bar p }^{\bar u}\Psi^{\bar q}_{\bar m\bar u}
-g^{\bar mi}g^{\bar pj}\Psi_{ij}^k\tilde R_{k\bar u }\Psi^{\bar u}_{\bar m\bar p}\\
&+g^{\bar mi}g^{\bar pj}g^{\bar ba}\nabla_a\hat R_{i\bar bj	\bar q}\Psi_{\bar m\bar p}^{\bar q}+g^{\bar mi}g^{\bar pj}\Psi_{ij}^kg^{\bar ba}\nabla_{\bar b}\hat R_{\bar ma\bar pk}\,.
\end{aligned}
$$
Therefore 
$$
\begin{aligned}
\Box S=& -|\nabla \Psi|_g^2-|\bar \nabla \Psi|_g^2+g^{\bar mi}g^{\bar pj}\nabla_i(\dot g_{j\bar q}+\tilde R_{j\bar q})\Psi_{\bar m\bar p}^{\bar q}+g^{\bar mi}g^{\bar pj}\Psi_{ij}^k\nabla_{\bar m}(\dot g_{k\bar p}+\tilde R_{k\bar p})\\
&+g^{\bar mi}g^{\bar pj}\Psi_{ij}^k(\dot g_{k\bar q}+\tilde R_{k\bar q})\Psi_{\bar u\bar p}^{\bar q}
-g_{k\bar q}g^{\bar mi}g^{\bar pj}\Psi_{ij}^k(\dot g_{\bar p }^{\bar u}+\tilde R_{\bar p }^{\bar u})\Psi^{\bar q}_{\bar m\bar u}\\
&-g_{k\bar q}g^{\bar mi}g^{\bar pj}\Psi_{ij}^k(\dot g_{\bar u }^{\bar q}+\tilde R_{\bar u }^{\bar q})\Psi^{\bar u}_{\bar m\bar p}\\
&
-g^{\bar mi}g^{\bar pj}g^{\bar ba}T_{ai}^kR_{k\bar bj\bar q}\Psi_{\bar m\bar p}^{\bar q}
-g^{\bar mi}g^{\bar pj}g^{\bar ba}T_{\bar b\bar m}^{\bar s}R_{\bar s a \bar p k}\Psi_{ij}^k\\
&-g^{\bar mi}g^{\bar pj}g^{\bar ba}\nabla_a\hat R_{i\bar bj	\bar q}\Psi_{\bar m\bar p}^{\bar q}-g^{\bar mi}g^{\bar pj}\Psi_{ij}^kg^{\bar ba}\nabla_{\bar b}\hat R_{\bar ma\bar pk}
\end{aligned}
$$
and using \eqref{for} we conclude 
$$
\begin{aligned}
\Box S=& -|\nabla \Psi|_g^2-|\bar \nabla \Psi|_g^2+g^{\bar mi}g^{\bar pj}\nabla_i(\dot g_{j\bar q}+\tilde R_{j\bar q})\Psi_{\bar m\bar p}^{\bar q}+g^{\bar mi}g^{\bar pj}\Psi_{ij}^k\nabla_{\bar m}(\dot g_{k\bar p}+\tilde R_{k\bar p})\\
&+g^{\bar mi}g^{\bar pj}\Psi_{ij}^k(\dot g_{k\bar q}+\tilde R_{k\bar q})\Psi_{\bar u\bar p}^{\bar q}
-g_{k\bar q}g^{\bar mi}g^{\bar pj}\Psi_{ij}^k(\dot g_{\bar p }^{\bar u}+\tilde R_{\bar p }^{\bar u})\Psi^{\bar q}_{\bar m\bar u}\\
&-g_{k\bar q}g^{\bar mi}g^{\bar pj}\Psi_{ij}^k(\dot g_{\bar u }^{\bar q}+\tilde R_{\bar u }^{\bar q})\Psi^{\bar u}_{\bar m\bar p}\\
&
+g^{\bar mi}g^{\bar pj}g^{\bar ba}T_{ai}^k\nabla_{\bar b}\Psi_{kj\bar q}\Psi_{\bar m\bar p}^{\bar q}
+g^{\bar mi}g^{\bar pj}g^{\bar ba}T_{\bar b\bar m}^{\bar s}\nabla_{a}\Psi_{\bar s \bar p k}\Psi_{ij}^k\\
&-g^{\bar mi}g^{\bar pj}g^{\bar ba}T_{ai}^k\hat R_{k\bar bj\bar q}\Psi_{\bar m\bar p}^{\bar q}
-g^{\bar mi}g^{\bar pj}g^{\bar ba}T_{\bar b\bar m}^{\bar s}\hat R_{\bar s a \bar p k}\Psi_{ij}^k\\
&-g^{\bar mi}g^{\bar pj}g^{\bar ba}\nabla_a\hat R_{i\bar bj	\bar q}\Psi_{\bar m\bar p}^{\bar q}-g^{\bar mi}g^{\bar pj}\Psi_{ij}^kg^{\bar ba}\nabla_{\bar b}\hat R_{\bar ma\bar pk}\,,
\end{aligned}
$$
as required. 
\end{proof}
\begin{lemma}\label{lemma1}
We have 
$$
\Box  {\rm tr}_{\hat g} g=
\hat g^{\bar s r}(\dot g_{r \bar s}+\tilde R_{r \bar s})-\hat g^{\bar s r}g^{\bar lk}g^{\bar b a}\hat \nabla_{\bar l}g_{a\bar s}\hat \nabla_{k}g_{r\bar b} -\hat g^{\bar s r}g^{\bar lk}\hat R_{k\bar lr}^mg_{m\bar s}\,.
$$
\end{lemma}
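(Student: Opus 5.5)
We compute both pieces of $\Box\,{\rm tr}_{\hat g} g$ directly in local holomorphic coordinates. We work at a fixed point and use that $\hat g$ is time independent. The time derivative is immediate:
$$
\partial_t\,{\rm tr}_{\hat g}g=\hat g^{\bar s r}\dot g_{r\bar s}.
$$
For the Laplacian we write $\Delta\,{\rm tr}_{\hat g}g=g^{\bar l k}\partial_k\partial_{\bar l}(\hat g^{\bar s r}g_{r\bar s})$. Since ${\rm tr}_{\hat g}g$ is a function, we may replace the partial derivatives by the Chern connection $\hat\nabla$ of $\hat g$. This gives
$$
\Delta\,{\rm tr}_{\hat g}g=g^{\bar lk}\hat g^{\bar s r}\hat\nabla_k\hat\nabla_{\bar l}g_{r\bar s},
$$
because $\hat\nabla\hat g=0$ and $\hat\nabla_{\bar l}$ acting on functions is just $\partial_{\bar l}$. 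This expression uses no torsion or commutation at all.

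The key step is to express $\hat\nabla_k\hat\nabla_{\bar l}g_{r\bar s}$ through the curvature $R$ of $g$. Using the formula for $R_{k\bar l r\bar s}$ recalled in the Preliminaries,
$$
R_{k\bar l r\bar s}=-g_{r\bar s,k\bar l}+g^{\bar b a}g_{a\bar s,\bar l}g_{r\bar b,k},
$$
we pass from coordinate derivatives to $\hat\nabla$-derivatives. A convenient choice is coordinates in which $\hat\Gamma$ vanishes at the point; this is not available in general since $\hat g$ is non-Kähler. It is therefore cleaner to use the tensorial identity coming from $\Psi=\nabla-\hat\nabla$. Concretely, $\hat\nabla_k g_{r\bar b}=g_{r\bar b,k}-\hat\Gamma_{kr}^a g_{a\bar b}$ is exactly $\Psi_{kr}^a g_{a\bar b}$, since $\nabla g=0$. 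Then $\hat\nabla_{\bar l}$ of this equals $\partial_{\bar l}$ applied to the $(1,0)$-part, up to the $\bar{\hat\Gamma}$ term.

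Combining this with \eqref{for}, namely $\nabla_{\bar l}\Psi_{kr}^a=-R_{k\bar l r}^a+\hat R_{k\bar l r}^a$, we obtain
$$
\hat\nabla_{\bar l}\hat\nabla_k g_{r\bar s}=-R_{k\bar l r\bar s}+\hat R_{k\bar l r}^m g_{m\bar s}+g^{\bar b a}\hat\nabla_{\bar l}g_{a\bar s}\hat\nabla_k g_{r\bar b}.
$$
The quadratic term arises when the $\nabla_{\bar l}$ derivative is rewritten as $\hat\nabla_{\bar l}$ on the factor $g_{a\bar b}$ and the $\bar\Psi$ correction.

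The $\hat\nabla$-derivatives do not commute, so a commutator appears: $[\hat\nabla_k,\hat\nabla_{\bar l}]$ on $g_{r\bar s}$ produces $\hat R$ terms and no torsion, since the two derivatives are of mixed type. We must choose the order consistent with $\Delta=g^{\bar s r}\partial_r\partial_{\bar s}$. On functions the order of the derivatives does not matter. Tensorially, however, the contracted identity gives $\hat g^{\bar s r}\hat\nabla_k\hat\nabla_{\bar l}g_{r\bar s}=\hat g^{\bar s r}\hat\nabla_{\bar l}\hat\nabla_k g_{r\bar s}$ after tracing with $\hat g$. This holds because the two curvature terms $\hat R_{k\bar l r}{}^{m}g_{m\bar s}$ and its conjugate partner cancel under the $\hat g$-trace; it is the standard fact that $\partial\bar\partial$ of a function is symmetric.

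Tracing with $g^{\bar lk}\hat g^{\bar s r}$ then gives
$$
\Delta\,{\rm tr}_{\hat g}g=-\hat g^{\bar s r}\tilde R_{r\bar s}+\hat g^{\bar s r}g^{\bar lk}\hat R_{k\bar l r}^m g_{m\bar s}+\hat g^{\bar s r}g^{\bar lk}g^{\bar ba}\hat\nabla_{\bar l}g_{a\bar s}\hat\nabla_k g_{r\bar b},
$$
where we used $g^{\bar lk}R_{k\bar l r\bar s}=\tilde R_{r\bar s}$. Subtracting this from $\partial_t\,{\rm tr}_{\hat g}g$ yields the stated formula.

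The main obstacle is the bookkeeping in the key step. We must check carefully that the $\bar{\hat\Gamma}$ and $\hat\Gamma$ corrections reassemble exactly into the single quadratic term $g^{\bar ba}\hat\nabla_{\bar l}g_{a\bar s}\hat\nabla_k g_{r\bar b}$ with the correct sign. We must also check that no torsion terms of $\hat g$ survive. These checks are most safely done by expanding everything in coordinates, writing $\hat\nabla_k g_{r\bar s}=g_{r\bar s,k}-\hat\Gamma_{kr}^m g_{m\bar s}$, and differentiating once more in $\bar l$.
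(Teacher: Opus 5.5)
Your proof is correct, and the pointwise identity you isolate,
\[
\hat\nabla_{\bar l}\hat\nabla_k g_{r\bar s}=-R_{k\bar l r\bar s}+\hat R_{k\bar l r}^m g_{m\bar s}+g^{\bar b a}\hat\nabla_{\bar l}g_{a\bar s}\,\hat\nabla_k g_{r\bar b},
\]
is what the paper's computation amounts to after tracing. The difference lies in how it is derived.

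The paper first performs the swap $\hat g^{\bar sr}\hat\nabla_k\hat\nabla_{\bar l}g_{r\bar s}=\hat g^{\bar sr}\hat\nabla_{\bar l}\hat\nabla_k g_{r\bar s}$; it does this silently, whereas you justify it. It then adds and subtracts $\tilde R$ through the coordinate formula $R_{k\bar lr\bar s}=-g_{r\bar s,k\bar l}+g^{\bar ba}g_{a\bar s,\bar l}g_{r\bar b,k}$. It rewrites every coordinate derivative of $g$ as a $\hat\nabla$-derivative plus $\hat\Gamma$-terms, and checks by hand that all $\hat\Gamma$-terms cancel except $\partial_{\bar l}\hat\Gamma_{kr}^m=-\hat R_{k\bar lr}^m$.

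You absorb all of that cancellation into the tensorial facts $\hat\nabla_k g_{r\bar s}=\Psi_{kr}^m g_{m\bar s}$ and \eqref{for}. Since $\hat\nabla_{\bar l}$ and $\nabla_{\bar l}$ both act as $\partial_{\bar l}$ on tensors with only unbarred indices, the Leibniz rule gives $\hat\nabla_{\bar l}(\Psi_{kr}^m g_{m\bar s})=(\nabla_{\bar l}\Psi_{kr}^m)g_{m\bar s}+\Psi_{kr}^m\hat\nabla_{\bar l}g_{m\bar s}$. Inserting \eqref{for} and $\Psi_{kr}^m=g^{\bar bm}\hat\nabla_k g_{r\bar b}$ yields the identity immediately, with no $\hat\Gamma$ or $\hat T$ terms ever appearing.

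So the bookkeeping you flag as the main obstacle is not one on your route. You should write this Leibniz step in place of the vaguer sentence about where the quadratic term comes from. Do not fall back on a coordinate expansion, which would just reproduce the paper's proof.

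Your route is shorter and makes the structure visible. The quadratic term is $\hat g^{\bar sr}g^{\bar lk}g_{m\bar b}\Psi_{kr}^m\Psi_{\bar l\bar s}^{\bar b}$, a $\hat g$-weighted copy of $S$, which is exactly what Proposition~\ref{prop1} needs to extract $-\frac1K S$. The paper's route, in exchange, uses nothing beyond the coordinate expression of the Chern curvature.
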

\begin{proof} We directly compute 
$$
\begin{aligned}
\Box  {\rm tr}_{\hat g} g=&\hat g^{\bar sr}\dot g_{r \bar s}-g^{\bar lk} \hat \nabla_{k}\hat \nabla_{\bar l}(\hat g^{\bar s r} 	 g_{r \bar s})\\
=&\hat g^{\bar s r}\dot g_{r \bar s}-\hat g^{\bar s r}g^{\bar lk}\hat \nabla_{\bar l} \hat \nabla_{k} 	 g_{r \bar s}\\
=&\hat g^{\bar s r}(\dot g_{r \bar s}+\tilde R_{r \bar s})-\hat g^{\bar s r}(\tilde R_{r \bar s}+g^{\bar lk}\hat \nabla_{\bar l} \hat \nabla_{k} 	 g_{r \bar s}))\\
=&\hat g^{\bar s r}(\dot g_{r \bar s}+\tilde R_{r \bar s})-\hat g^{\bar s r}(-g^{\bar lk}g_{r \bar s,k\bar l}+g^{\bar lk}g^{\bar b a}g_{a\bar s,\bar l}g_{r	\bar b,k}+g^{\bar lk}\hat \nabla_{\bar l} \hat \nabla_{k} 	 g_{s \bar r})\,.
\end{aligned}
$$
Since 
$$
g_{a\bar s,\bar l}=\hat \nabla_{\bar l}g_{a\bar s}+\hat \Gamma_{\bar{l}\bar{s}}^{\bar p}g_{a\bar p}\,,\quad 
g_{r	\bar b,k}=\hat \nabla_{k}g_{r\bar b}+\hat \Gamma_{kr}^{m}g_{m\bar b} 
$$
and 
$$
\begin{aligned}
g_{r \bar s,k\bar l}=&\partial_{\bar l}(\hat{\nabla}_kg_{r \bar s}+\hat \Gamma_{kr}^mg_{m\bar s})
=\partial_{\bar l}\hat{\nabla}_kg_{r \bar s}+\partial_{\bar l}(\hat \Gamma_{kr}^m)g_{m\bar s}+
\hat \Gamma_{kr}^mg_{m\bar s,\bar l}\\
=&\hat{\nabla}_{\bar l}\hat{\nabla}_kg_{r \bar s}+\Gamma_{\bar l\bar s}^{\bar m}\hat{\nabla}_kg_{r \bar s}-\hat R_{k\bar lr}^mg_{m\bar s}+
\hat \Gamma_{kr}^m\hat{\nabla}_{\bar l}g_{m\bar s}+\hat \Gamma_{kr}^m\hat \Gamma_{\bar l\bar s}^{\bar p}g_{m\bar p}\,,
\end{aligned}
$$
it follows 
$$
-g^{\bar lk}g_{r \bar s,k\bar l}=-g^{\bar lk}\hat{\nabla}_{\bar l}\hat{\nabla}_kg_{r \bar s}
-g^{\bar lk}\Gamma_{\bar l\bar s}^{\bar m}\hat{\nabla}_kg_{r \bar m}+g^{\bar lk}\hat R_{k\bar lr}^mg_{m\bar s}-g^{\bar lk}
\hat \Gamma_{kr}^m\hat{\nabla}_{\bar l}g_{m\bar s}-g^{\bar lk}\hat \Gamma_{kr}^m\hat \Gamma_{\bar l\bar s}^{\bar p}g_{m\bar p}
$$
and 
$$
g^{\bar lk}g^{\bar b a}g_{a\bar s,\bar l}g_{r	\bar b,k}
=g^{\bar lk}g^{\bar b a}\hat \nabla_{\bar l}g_{a\bar s}\hat \nabla_{k}g_{r\bar b}+
g^{\bar lk}\hat \Gamma_{\bar{l}\bar{s}}^{\bar b}\hat \Gamma_{kr}^{m}g_{m\bar b}
+g^{\bar lk}\hat \nabla_{\bar l}g_{m\bar s}\hat \Gamma_{kr}^{m 	}
+g^{\bar lk}\hat \Gamma_{\bar{l}\bar{s}}^{\bar m}\hat \nabla_{k}g_{r\bar m}\,.
$$
Then the claim follows. 
\end{proof}
 
\begin{lemma}\label{lemma2}
We have 
$$
\Box  {\rm tr}_{g} \hat g=- g^{\bar l \alpha} (\dot g_{\alpha\bar \beta} +\tilde R_{\alpha\bar \beta}) g^{\bar \beta k} \hat g_{k \bar l}
+g^{\bar s r}g^{\bar l k} \hat{R}_{r\bar sk \bar l}-\hat g_{k\bar l}g^{\bar b a}g^{\bar r s}\Psi_{as}^k\Psi_{\bar b\bar r}^{\bar l}\,. 
$$
\end{lemma}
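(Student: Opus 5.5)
We compute $\Box\,{\rm tr}_g\hat g$ directly in local holomorphic coordinates, in the same way as Lemma \ref{lemma1}, with the roles of $g$ and $\hat g$ exchanged. Write ${\rm tr}_g\hat g=g^{\bar l k}\hat g_{k\bar l}$.

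\textbf{Time derivative.} Since $\hat g$ is fixed, only the inverse metric varies:
$$
\partial_t\,{\rm tr}_g\hat g=-g^{\bar l\alpha}\dot g_{\alpha\bar\beta}g^{\bar\beta k}\hat g_{k\bar l}\,.
$$

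\textbf{Laplacian.} Since $\nabla g=0$, we have $\Delta\,{\rm tr}_g\hat g=g^{\bar lk}g^{\bar b a}\nabla_a\nabla_{\bar b}\hat g_{k\bar l}$. The first step is to express $\nabla\hat g$ through $\Psi$. Because $\hat\nabla\hat g=0$ and $\nabla-\hat\nabla=\Psi$, one gets $\nabla_a\hat g_{k\bar l}=-\Psi_{ak}^m\hat g_{m\bar l}$. Conjugating, and using that $\Psi$ has only pure type components, gives $\nabla_{\bar b}\hat g_{k\bar l}=-\Psi_{\bar b\bar l}^{\bar q}\hat g_{k\bar q}$. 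Differentiating once more with $\nabla_a$ produces two terms:
$$
\nabla_a\nabla_{\bar b}\hat g_{k\bar l}=-\nabla_a\Psi_{\bar b\bar l}^{\bar q}\,\hat g_{k\bar q}+\Psi_{\bar b\bar l}^{\bar q}\Psi_{ak}^m\hat g_{m\bar q}\,.
$$
By the conjugate of \eqref{for}, $\nabla_a\Psi_{\bar b\bar l}^{\bar q}=-R_{\bar b a\bar l}^{\bar q}+\hat R_{\bar b a\bar l}^{\bar q}$. The quadratic term contracts to $\hat g_{m\bar q}g^{\bar ba}g^{\bar lk}\Psi_{ak}^m\Psi_{\bar b\bar l}^{\bar q}$, which after relabelling is the last term of the claim. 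It then enters $\Box$ with a minus sign.

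\textbf{Curvature terms.} Contracting with $g^{\bar ba}$ takes the trace of the curvature over the differentiation indices, $g^{\bar ba}R_{a\bar b\,\cdot\,\cdot}$. This is exactly the second Chern–Ricci tensor, so the $g$-curvature term becomes $-g^{\bar l\alpha}\tilde R_{\alpha\bar\beta}g^{\bar\beta k}\hat g_{k\bar l}$ with the correct sign. Combined with the time derivative it gives the first term $-g^{\bar l\alpha}(\dot g+\tilde R)_{\alpha\bar\beta}g^{\bar\beta k}\hat g_{k\bar l}$. The $\hat R$ term becomes $g^{\bar sr}g^{\bar lk}\hat R_{r\bar s k\bar l}$ after lowering the index with $\hat g$ and using $g^{\bar ba}$ as the trace on the first pair.

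\textbf{Main obstacle.} The hard part is the bookkeeping of signs and index placement. First, the conjugation convention for $\Psi_{\bar b\bar l}^{\bar q}$ must match the one used in Proposition \ref{BoxS}. Second, one must check that the trace lands on the differentiation pair of indices (giving $\tilde R$, not ${\rm Ric}$) and that the sign of $\hat R$ comes out positive. I would verify this against Lemma \ref{lemma1} by taking $g=\hat g$ at a point: there $\Psi=0$, and both formulas reduce to traces of $\hat R$ with consistent signs.
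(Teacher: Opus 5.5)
Your computation is correct and gives exactly the stated formula, but it follows a genuinely different, covariant route from the paper's.

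The paper works purely with partial derivatives. It expands $\Delta(g^{\bar lk}\hat g_{k\bar l})$ and inserts the coordinate expression of $\tilde R$ to cancel the second derivatives of $g$. It then rewrites $-g^{\bar sr}g^{\bar lk}\hat g_{k\bar l,r\bar s}$ through the coordinate formula for $\hat R$. Finally it checks, by expanding $\Gamma$ and $\hat\Gamma$, that the four leftover products of first derivatives recombine into $\hat g_{k\bar l}g^{\bar ba}g^{\bar rs}\Psi_{as}^k\Psi_{\bar b\bar r}^{\bar l}$.

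You instead use $\nabla g=0$, so that only $\hat g$ is differentiated. The identity $\nabla\hat g=-\Psi\cdot\hat g$ then produces the $\Psi\bar\Psi$ term directly from the Leibniz rule, and the conjugate of \eqref{for} delivers both curvature terms in one step. Your route is shorter, makes the origin of each term transparent, and mirrors the proof of Proposition \ref{BoxS}. The paper's route is more pedestrian but needs nothing beyond the local formula for the Chern curvature.

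The one step you leave implicit is exactly where you locate the difficulty, so state it. By the Hermitian symmetry of the Chern curvature,
$g^{\bar ba}R^{\bar q}_{\bar ba\bar l}=\overline{\tilde R_l^{\;q}}=g^{\bar qs}\tilde R_{s\bar l}$ and $\hat R^{\bar q}_{\bar ba\bar l}\,\hat g_{k\bar q}=\overline{\hat R_{b\bar al\bar k}}=\hat R_{a\bar bk\bar l}$.
Hence $-g^{\bar lk}g^{\bar ba}\nabla_a\Psi^{\bar q}_{\bar b\bar l}\,\hat g_{k\bar q}$ contributes $+\tilde R$ and $-\hat R$ to $\Delta\,{\rm tr}_g\hat g$. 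Applying $\Box=\partial_t-\Delta$ then gives precisely the signs in the statement.

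Drop the proposed sanity check as you wrote it. The condition $g(x_0)=\hat g(x_0)$ does not give $\Psi(x_0)=0$, since $\Psi$ involves first derivatives of both metrics; you would need $g\equiv\hat g$ near $x_0$. Even then, the check only confirms that the $\tilde R$ and $\hat R$ terms have opposite signs, not what their absolute signs are. The two identities above are what actually settle the signs.
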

\begin{proof} We directly compute 
$$
\begin{aligned}
\Box  {\rm tr}_{g} \hat g=\,&\Box  (g^{\bar l k} \hat g_{k \bar l})\\
=&\,- g^{\bar l \alpha} \dot g_{\alpha\bar \beta} g^{\bar \beta k} \hat g_{k \bar l}+g^{\bar r s}\partial_{\bar r}(g^{\bar l \alpha}  g_{\alpha\bar \beta,s} g^{\bar \beta  k}\hat g_{k \bar l}
-g^{\bar l k} \hat g_{k \bar l,s})\\
=&- g^{\bar l \alpha} \dot g_{\alpha\bar \beta} g^{\bar \beta k} \hat g_{k \bar l}-g^{\bar r s}g^{\bar l \gamma}g_{\gamma\bar \delta,\bar r}g^{\bar \delta \alpha}  g_{\alpha\bar \beta,s} g^{\bar \beta k} \hat g_{k \bar l}
+g^{\bar r s}g^{\bar l \alpha}  g_{\alpha\bar \beta,s\bar r} g^{\bar \beta  k}\hat g_{k \bar l}\\
&-g^{\bar r s}g^{\bar l \alpha}  g_{\alpha\bar \beta,s} g^{\bar \beta \gamma}g_{\gamma \bar \delta,\bar r}g^{\bar 	\delta k} \hat g_{k \bar l}+g^{\bar r s}g^{\bar l \alpha}  g_{\alpha\bar \beta,s} g^{\bar \beta  k}\hat g_{k \bar l,\bar r}\\
&+g^{\bar r s}g^{\bar l a}g_{a\bar b,\bar r}g^{\bar bk} \hat g_{k \bar l,s}-g^{\bar s r}g^{\bar l k} \hat g_{k \bar l,r\bar s} \\
=&- g^{\bar l \alpha} (\dot g_{\alpha\bar \beta} +\tilde R_{\alpha\bar \beta}) g^{\bar \beta k} \hat g_{k \bar l}-g^{\bar r s}g^{\bar l \gamma}g_{\gamma\bar \delta,\bar r}g^{\bar \delta \alpha}  g_{\alpha\bar \beta,s} g^{\bar \beta k} \hat g_{k \bar l}
+g^{\bar r s}g^{\bar l \alpha}  g_{\alpha\bar \beta,s\bar r} g^{\bar \beta  k}\hat g_{k \bar l}\\
&-g^{\bar r s}g^{\bar l \alpha}  g_{\alpha\bar \beta,s} g^{\bar \beta \gamma}g_{\gamma \bar \delta,\bar r}g^{\bar 	\delta k} \hat g_{k \bar l}+g^{\bar r s}g^{\bar l \alpha}  g_{\alpha\bar \beta,s} g^{\bar \beta  k}\hat g_{k \bar l,\bar r}\\
&+g^{\bar r s}g^{\bar l a}g_{a\bar b,\bar r}g^{\bar bk} \hat g_{k \bar l,s}-g^{\bar s r}g^{\bar l k} \hat g_{k \bar l,r\bar s}
+g^{\bar l \alpha}\tilde R_{\alpha\bar \beta}g^{\bar \beta k} \hat g_{k \bar l} \\
=&- g^{\bar l \alpha} (\dot g_{\alpha\bar \beta} +\tilde R_{\alpha\bar \beta}) g^{\bar \beta k} \hat g_{k \bar l}-g^{\bar r s}g^{\bar l \gamma}g_{\gamma\bar \delta,\bar r}g^{\bar \delta \alpha}  g_{\alpha\bar \beta,s} g^{\bar \beta k} \hat g_{k \bar l}
+g^{\bar r s}g^{\bar l \alpha}  g_{\alpha\bar \beta,s\bar r} g^{\bar \beta  k}\hat g_{k \bar l}\\
&-g^{\bar r s}g^{\bar l \alpha}  g_{\alpha\bar \beta,s} g^{\bar \beta \gamma}g_{\gamma \bar \delta,\bar r}g^{\bar 	\delta k} \hat g_{k \bar l}+g^{\bar r s}g^{\bar l \alpha}  g_{\alpha\bar \beta,s} g^{\bar \beta  k}\hat g_{k \bar l,\bar r}\\
&+g^{\bar r s}g^{\bar l a}g_{a\bar b,\bar r}g^{\bar bk} \hat g_{k \bar l,s}-g^{\bar s r}g^{\bar l k} \hat g_{k \bar l,r\bar s}
-g^{\bar l \alpha} g^{\bar r s}g_{\alpha\bar 	\beta,s	\bar r} g^{\bar \beta k}\hat g_{k \bar l}
+g^{\bar r s}g^{\bar l \alpha}  g_{\alpha\bar \beta,s} g^{\bar \beta \gamma}g_{\gamma \bar \delta,\bar r}g^{\bar 	\delta k} 
 \\
=&- g^{\bar l \alpha} (\dot g_{\alpha\bar \beta} +\tilde R_{\alpha\bar \beta}) g^{\bar \beta k} \hat g_{k \bar l}-g^{\bar r s}g^{\bar l \gamma}g_{\gamma\bar \delta,\bar r}g^{\bar \delta \alpha}  g_{\alpha\bar \beta,s} g^{\bar \beta k} \hat g_{k \bar l}
+g^{\bar r s}g^{\bar l \alpha}  g_{\alpha\bar \beta,s} g^{\bar \beta  k}\hat g_{k \bar l,\bar r}\\
&+g^{\bar rs}g^{\bar l a}g_{a\bar b,\bar r}g^{\bar bk} \hat g_{k \bar l, s}-g^{\bar s r}g^{\bar l k} \hat g_{k \bar l,r\bar s}\,,
\end{aligned}
$$
i.e. 
$$
\begin{aligned}
\Box  {\rm tr}_{g} \hat g=&- g^{\bar l \alpha} (\dot g_{\alpha\bar \beta} +\tilde R_{\alpha\bar \beta}) g^{\bar \beta k} \hat g_{k \bar l}-g^{\bar r s}g^{\bar la}g_{a\bar b,\bar r}g^{\bar b \alpha}  g_{\alpha\bar \beta,s} g^{\bar \beta k} \hat g_{k \bar l}
+g^{\bar r s}g^{\bar la}  g_{a\bar b,s} g^{\bar b  k}\hat g_{k \bar l,\bar r}\\
&+g^{\bar rs}g^{\bar l a}g_{a\bar b,\bar r}g^{\bar bk} \hat g_{k \bar l, s}
-g^{\bar sr}g^{\bar l k}g^{\bar \beta \gamma}\hat g_{\gamma \bar l,s}\hat g_{k\bar \beta,r}+g^{\bar s r}g^{\bar l k} \hat R_{r\bar sk \bar l}\,.
\end{aligned}
$$
Finally we observe that 
\begin{multline*}
\hat g_{k\bar l}g^{\bar b a}g^{\bar r s}\Psi_{as}^k\Psi_{\bar b\bar r}^{\bar l}= g^{\bar r s}g^{\bar la}g_{a\bar b,\bar r}g^{\bar b \alpha}  g_{\alpha\bar \beta,s} g^{\bar \beta k} \hat g_{k \bar l}
-g^{\bar r s}g^{\bar la}  g_{a\bar b,s} g^{\bar b  k}\hat g_{k \bar l,\bar r}\\
-g^{\bar rs}g^{\bar l a}g_{a\bar b,\bar r}g^{\bar bk} \hat g_{k \bar l, s}
+g^{\bar sr}g^{\bar l k}g^{\bar \beta \gamma}\hat g_{\gamma \bar l,s}\hat g_{k\bar \beta,r}\,. 
\end{multline*}
Indeed 
$$
\begin{aligned}
&g^{\bar r s}g^{\bar la}g_{a\bar b,\bar r}g^{\bar b \alpha}  g_{\alpha\bar \beta,s} g^{\bar \beta k} \hat g_{k \bar l}
-g^{\bar r s}g^{\bar la}  g_{a\bar b,s} g^{\bar b  k}\hat g_{k \bar l,\bar r}
-g^{\bar rs}g^{\bar l a}g_{a\bar b,\bar r}g^{\bar bk} \hat g_{k \bar l, s}
+g^{\bar sr}g^{\bar l k}g^{\bar \beta \gamma}\hat g_{\gamma \bar l,s}\hat g_{k\bar \beta,r} =\\
& g^{\bar r s}\Gamma^{\bar l}_{\bar r\bar b} 
\Gamma_{s\alpha}^{k}g^{\bar b \alpha}   \hat g_{k \bar l}
-g^{\bar r s}g^{\bar la}\Gamma^{k}_{sa}  \hat \Gamma^{\bar m}_{\bar r \bar l}\hat g_{a\bar m}
-g^{\bar rs}  g^{\bar b k}
\Gamma_{\bar r\bar b}^{\bar l}\hat\Gamma_{sk}^{m} \hat g_{ m \bar l}
+g^{\bar rs}  g^{\bar b k}
\hat \Gamma_{\bar r\bar b}^{\bar l}\hat\Gamma_{sk}^{m} \hat g_{ m \bar l}
\end{aligned}
$$
and the claim follows.
\end{proof}

\begin{lemma}\label{lemma3}
We have 
$$
\Box  \log\frac{\det g}{\det \hat g}={\rm tr}_g(\dot g+\widetilde {\rm Ric})-{\rm tr}_{g}(\hat{\rm Ric})\,.
$$
\end{lemma}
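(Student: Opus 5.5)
We compute the two pieces of $\Box=\partial_t-\Delta$ separately, with $\Delta=g^{\bar s r}\partial_r\partial_{\bar s}$ acting on the function $F:=\log\det g-\log\det\hat g$. Since $\log\det\hat g$ is independent of $t$, the time derivative is
$$
\partial_t F=g^{\bar s r}\dot g_{r\bar s}={\rm tr}_g\dot g\,.
$$
This identity is valid in any local holomorphic chart. Moreover $F$ is globally defined, because the ratio $\det g/\det\hat g$ does not depend on the chart.

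For the spatial part we use the standard formula for the Chern connection:
$$
\partial_{\bar s}\partial_r\log\det g=\partial_{\bar s}\big(g^{\bar l k}g_{k\bar l,r}\big)=\partial_{\bar s}\Gamma^k_{rk}\,,
$$
where $\Gamma^k_{rk}=g^{\bar l k}g_{k\bar l,r}$ is the trace of the Christoffel symbols, in agreement with $\Gamma_{ij}^k=g^{\bar rk}g_{j\bar r,i}$ from the Preliminaries. Hence
$$
\partial_r\partial_{\bar s}\log\det g=g^{\bar lk}g_{k\bar l,r\bar s}-g^{\bar l a}g_{a\bar b,\bar s}g^{\bar b k}g_{k\bar l,r}\,.
$$
Comparing with the coordinate expression $R_{k\bar l r\bar s}=-g_{r\bar s,k\bar l}+g^{\bar b a}g_{a\bar s,\bar l}g_{r\bar b,k}$, the right-hand side equals $-g^{\bar l k}R_{r\bar s k\bar l}$. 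This is minus the first Chern--Ricci form, $R_{r\bar s}$, whose curvature indices are the first pair. The same computation for $\hat g$ gives $\partial_r\partial_{\bar s}\log\det\hat g=-\hat R_{r\bar s}$. Therefore
$$
\Delta F=g^{\bar s r}\big(-R_{r\bar s}+\hat R_{r\bar s}\big)=-{\rm tr}_g{\rm Ric}+{\rm tr}_g\hat{\rm Ric}\,.
$$

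It remains to observe that ${\rm tr}_g{\rm Ric}={\rm tr}_g\widetilde{\rm Ric}$. Indeed, both traces equal the Chern scalar curvature $g^{\bar s r}g^{\bar l k}R_{k\bar l r\bar s}$, obtained by contracting both index pairs. Alternatively, one can trace formula \eqref{R-R} and note that the two torsion terms cancel under full contraction. Combining the two pieces,
$$
\Box F={\rm tr}_g\dot g+{\rm tr}_g\widetilde{\rm Ric}-{\rm tr}_g\hat{\rm Ric}\,,
$$
which is the claim.

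The only delicate step is keeping track of which pair of curvature indices is contracted, so that $\partial\bar\partial\log\det$ is correctly identified with the first Ricci form $\hat{\rm Ric}$ for $\hat g$. For $g$ no such care is needed, because after taking the full $g$-trace the first and second Ricci forms give the same scalar.
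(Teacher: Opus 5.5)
Your proof is correct and follows the paper's one-line computation. The only difference is that you make explicit the two facts the paper uses silently: $-\partial\bar\partial\log\det g={\rm Ric}$ (the first Chern--Ricci form, for both $g$ and $\hat g$), and ${\rm tr}_g{\rm Ric}={\rm tr}_g\widetilde{\rm Ric}$, which follows by relabeling dummy indices.

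Drop the aside about tracing \eqref{R-R}. After full contraction, its two torsion terms are $g^{\bar l k}\nabla_{\bar l}T_{ki}^{i}$ and, up to sign, the complex conjugate of that quantity, so nothing cancels termwise. With the sign as printed they add to $2\,{\rm Re}\,g^{\bar l k}\nabla_{\bar l}T_{ki}^{i}$, which is nonzero for $g=e^{f}g_0$ with $g_0$ flat, $f$ nonconstant and $n\geq 2$. Your relabeling argument needs no such input.
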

\begin{proof}
We directly compute 
$$
\begin{aligned}
\Box \log\frac{\det g}{\det \hat g}=&g^{\bar sr}\dot{g}_{r\bar s}-g^{\bar s r}\partial_{r}\partial_{\bar s}(\log \det g)+g^{\bar s r}\partial_{r}\partial_{\bar s}(\log \det \hat g)\\
=&{\rm tr}_g(\dot g+\widetilde {\rm Ric})-{\rm tr}_{g}(\hat{\rm Ric})\,,
\end{aligned}
$$
as required. 
\end{proof}

Next we compute $\square |T|_g^2$ and $\square |{\rm Rm}|_g^2$. 

\begin{lemma}\label{boxT} We have
\begin{equation*}
\begin{split}
\square\,|T|^2_g=&-|\nabla T|^2_g-|\bar\nabla T|^2_g+4\operatorname{Re}\left(g^{\bar pi}g^{\bar qj}\nabla_i(\dot g_{j\bar r}+\tilde R_{j\bar r})T_{\bar p\bar q}^{\bar r}\right)\\&-2g^{\bar bi}g^{\bar qj}g^{\bar pa}g_{k\bar r}T_{ij}^k(\dot g_{a\bar b}+\tilde R_{a\bar b})T_{\bar p\bar q}^{\bar r}+ g^{\bar pi}g^{\bar qj}T_{ij}^k(\dot g_{k\bar r}+\tilde R_{k\bar r})T_{\bar p\bar q}^{\bar r}\\&+
4\operatorname{Re}\left(g^{\bar pi}g^{\bar qj}g^{\bar ba}g_{k\bar r}T_{aj}^mT_{\bar p\bar q}^{\bar r}R_{m\bar bi}^k\right)\,.
\end{split}
\end{equation*}
\end{lemma}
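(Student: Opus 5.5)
The computation will follow the same pattern as the proof of Proposition \ref{BoxS}, with $\Psi$ replaced by $T$. I will write
$$
|T|_g^2=g_{k\bar r}g^{\bar pi}g^{\bar qj}T_{ij}^kT_{\bar p\bar q}^{\bar r}
$$
and compute the time derivative and the Laplacian separately.

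\textbf{Time derivative.} Differentiating the three metric factors produces the terms $\dot g_{k\bar r}(\cdot)$ and two terms of the form $-\dot g^{\bar p}_{\bar b}g^{\bar bi}(\cdot)$. By the antisymmetry $T_{ij}^k=-T_{ji}^k$, these last two terms coincide, which explains the factor $-2$ in the statement. For the variation of $T$ itself, the computation of $\dot\Psi$ in the proof of Proposition \ref{BoxS} gives $\partial_t\Gamma_{ij}^k=\nabla_i\dot g_j^k$. Hence
$$
\partial_t T_{ij}^k=\nabla_i\dot g_j^k-\nabla_j\dot g_i^k .
$$
After contracting with $T_{\bar p\bar q}^{\bar r}$ and using antisymmetry again, I obtain $2g^{\bar pi}g^{\bar qj}\nabla_i\dot g_{j\bar r}T^{\bar r}_{\bar p\bar q}$ plus its conjugate, that is, $4{\rm Re}(\cdots)$.

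\textbf{Laplacian.} I will write $\Delta|T|^2=g^{\bar ba}\nabla_a\nabla_{\bar b}|T|^2$, which is legitimate on functions. This produces $|\nabla T|^2+|\bar\nabla T|^2$ together with the terms $T*\nabla_a\nabla_{\bar b}T$ and their conjugates. The key input is the Chern-connection identity
$$
\nabla_{\bar b}T_{ij}^k=-R_{i\bar b j}^k+R_{j\bar b i}^k ,
$$
which follows from $\partial_{\bar b}\Gamma$. I will then differentiate once more in the $a$ direction and apply the second Bianchi identity with torsion,
$$
\nabla_aR_{i\bar bj}^{\ \ k}=\nabla_iR_{a\bar bj}^{\ \ k}+T_{ai}^mR_{m\bar bj}^{\ \ k}.
$$
Contracting with $g^{\bar ba}$ turns $\nabla_iR_{a\bar b}{}_{j\bar r}$ into $\nabla_i\tilde R_{j\bar r}$. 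The antisymmetrization again yields a factor $2$, and with the conjugate term this gives $4{\rm Re}$. The torsion correction supplies the curvature term $4{\rm Re}(T*T*{\rm Rm})$ in the statement.

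\textbf{Commuting $\nabla_a$ and $\nabla_{\bar b}$.} For the conjugate factor $T_{\bar p\bar q}^{\bar r}$ one must commute $\nabla_a\nabla_{\bar b}$ into $\nabla_{\bar b}\nabla_a$ in order to use the conjugate identity. The commutator on a tensor with indices $\bar p,\bar q,\bar r$ produces contractions with $g^{\bar ba}R_{a\bar b}{}^{\bar u}_{\bar p}$, which are exactly $\tilde R$ terms, as in Proposition \ref{BoxS}. Two of these terms are equal by antisymmetry, and they pair with the $\dot g$ terms from the time derivative. This pairing turns $\dot g$ into $\dot g+\tilde R$, giving the terms $-2T*T*(\dot g+\tilde R)$ and $+T*T*(\dot g+\tilde R)$.

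\textbf{Main obstacle.} The delicate step is the bookkeeping of this last part. I need the signs and index placements of the $\tilde R$ commutator terms to match the metric-variation terms exactly, so that only the combination $\dot g+\tilde R$ survives. I also need to check that no stray $\nabla{\rm Rm}$ or ${\rm Rm}*T*T$ terms remain beyond the one displayed. I would carry this out at a point in coordinates where $g$ is the identity, and verify each of the four contractions against the analogous lines in Proposition \ref{BoxS}.
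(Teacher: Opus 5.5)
Your plan follows the paper's proof step for step: the time derivative via $\partial_t\Gamma_{ij}^k=\nabla_i\dot g_j^k$ and the antisymmetry of $T$; the identity $\nabla_{\bar b}T_{ij}^k=-R_{i\bar bj}^k+R_{j\bar bi}^k$ followed by the Bianchi identity on the unbarred factor; and the commutator $[\nabla_a,\nabla_{\bar b}]$ on the barred factor, which produces the $\tilde R$ terms completing $\dot g$ to $\dot g+\tilde R$ with coefficients $-2$ and $+1$.

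The one thing to correct is the sign of the torsion term in your Bianchi identity. With the paper's conventions ($T_{ij}^k=\Gamma_{ij}^k-\Gamma_{ji}^k$ and $R_{i\bar bj}{}^k=-\partial_{\bar b}\Gamma_{ij}^k$) it reads
$$
\nabla_aR_{i\bar bj}{}^k=\nabla_iR_{a\bar bj}{}^k-T_{ai}^mR_{m\bar bj}{}^k ,
$$
which is also the form used in the proof of Proposition \ref{BoxS}. To see this, take the cyclic sum of $(\nabla_XR)(Y,Z)+R(T(X,Y),Z)$, which vanishes, with $X=\partial_a$, $Y=\partial_i$, $Z=\partial_{\bar b}$. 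Then use $R(\partial_a,\partial_i)=0$ and $T(\partial_i,\partial_{\bar b})=0$. Equivalently, differentiate in $\bar b$ the vanishing of the $(2,0)$-curvature, $\partial_a\Gamma_{ij}^k-\partial_i\Gamma_{aj}^k=\Gamma_{aj}^m\Gamma_{im}^k-\Gamma_{ij}^m\Gamma_{am}^k$.

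With your $+$ sign you would instead get
$$
g^{\bar ba}\nabla_a\nabla_{\bar b}T_{ij}^k=-\nabla_i\tilde R_j^k+\nabla_j\tilde R_i^k-g^{\bar ba}T_{ai}^mR_{m\bar bj}^k+g^{\bar ba}T_{aj}^mR_{m\bar bi}^k .
$$
After applying $-\Delta$, using antisymmetry and adding the conjugate, the last term of the lemma would then come out as $-4{\rm Re}(g^{\bar pi}g^{\bar qj}g^{\bar ba}g_{k\bar r}T_{aj}^mT_{\bar p\bar q}^{\bar r}R_{m\bar bi}^k)$ rather than $+4{\rm Re}(\cdots)$. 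Only a bound on this term is used later, in Theorem \ref{ThH=0}. Still, the lemma is an exact identity, so the sign matters. The rest of your bookkeeping, including the pairing of the commutator terms with the metric-variation terms, is correct.
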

\begin{proof}
    Since $|T|^2_g=g^{\bar ji} g^{\bar nm}g_{a\bar b}T_{im}^aT_{\bar j\bar n}^{\bar b}$, we have
    \begin{equation*}\begin{split}
        \partial_t|T|^2_g=&-g^{\bar js}g^{\bar ri}\dot g_{s\bar r}g^{\bar nm}g_{a\bar b}T_{im}^aT_{\bar j\bar n}^{\bar b}-g^{\bar ji}g^{\bar ns}g^{\bar rm}\dot g_{s\bar r}g_{a\bar b}T_{im}^aT_{\bar j\bar n}^{\bar b}+g^{\bar ji} g^{\bar nm}\dot g_{a\bar b}T_{im}^aT_{\bar j\bar n}^{\bar b}\\&+g^{\bar ji} g^{\bar nm}g_{a\bar b}\dot T_{im}^aT_{\bar j\bar n}^{\bar b}+g^{\bar ji} g^{\bar nm}g_{a\bar b}T_{im}^a\dot T_{\bar j\bar n}^{\bar b}\,.
    \end{split}\end{equation*}
Furthermore\begin{equation*}
    \dot T_{im}^a=\dot \Gamma_{im}^a-\dot \Gamma_{mi}^a=\nabla_i\dot g_m^a-\nabla_m\dot g_i^a\,.
\end{equation*}
Now, since
\begin{equation*}
    \nabla_{\bar b}T_{ij}^k=-R_{i\bar bj}^k+R_{j\bar bi}^k\,,
\end{equation*} we have
\begin{equation*}\begin{split}
    g^{\bar ba}\nabla_a\nabla_{\bar b}T_{ij}^k=&g^{\bar ba}\nabla_a(-R_{i\bar bj}^k+R_{j\bar bi}^k)=g^{\bar ba}(-\nabla_iR_{a\bar bj}^k+T_{ai}^mR_{m\bar bj}^k+\nabla_jR_{a\bar bi}^k-T_{aj}^mR_{m\bar bi}^k)\\=&-\nabla_i\tilde R_j^k+\nabla_j\tilde R_i^k+g^{\bar ba}T_{ai}^mR_{m\bar bj}^k-g^{\bar ba}T_{aj}^mR_{m\bar bi}^k\,.\end{split}
\end{equation*}
Similarly, we have
\begin{equation*}
    g^{\bar ba}\nabla_{\bar b}\nabla_aT_{\bar p\bar q}^{\bar r}=-\nabla_{\bar p}\tilde R_{\bar q}^{\bar r}+\nabla_{\bar q}\tilde R_{\bar p}^{\bar r}+g^{\bar ba}T_{\bar b\bar p}^{\bar n}R_{\bar na\bar q}^{\bar r}-g^{\bar ba}T_{\bar b\bar q}^{\bar n}R_{\bar na\bar p}^{\bar r}\,.
\end{equation*}
Then,
\begin{equation*}
    \begin{split}
        \Delta \,|T|^2_g=|\nabla T|^2_g+|\bar\nabla T|^2_g+g^{\bar pi}g^{\bar qj}g_{k\bar r}g^{\bar ba}(\nabla_ a\nabla_{\bar b}T_{ij}^kT_{\bar p\bar q}^{\bar r}+T_{ij}^k\nabla_a\nabla_{\bar b}T_{\bar p\bar q}^{\bar r})
    \end{split}
\end{equation*} and\begin{equation*}
    [\nabla_a,\nabla_{\bar b}]T_{\bar p\bar q}^{\bar r}=R_{\bar ba\bar p}^{\bar n}T_{\bar n\bar q}^{\bar r}+R_{\bar ba\bar q}^{\bar n}T_{\bar p\bar n}^{\bar r}-R_{\bar ba\bar n}^{\bar r}T_{\bar p\bar q}^{\bar n}\,,
\end{equation*}hence\begin{equation*}\begin{split}
\Delta\,|T|^2_g=&\,|\nabla T|^2_g+|\bar\nabla T|^2_g+g^{\bar pi}g^{\bar qj}g_{k\bar r}g^{\bar ba}(\nabla_ a\nabla_{\bar b}T_{ij}^kT_{\bar p\bar q}^{\bar r}+T_{ij}^k(\nabla_{\bar b}\nabla_aT_{\bar p\bar q}^{\bar r}+R_{\bar ba\bar p}^{\bar n}T_{\bar n\bar q}^{\bar r}+R_{\bar ba\bar q}^{\bar n}T_{\bar p\bar n}^{\bar r}-R_{\bar ba\bar n}^{\bar r}T_{\bar p\bar q}^{\bar n}))\\
=&\,|\nabla T|^2_g+|\bar\nabla T|^2_g+g^{\bar pi}g^{\bar qj}g_{k\bar r}(-\nabla_i\tilde R_j^k+\nabla_j\tilde R_i^k+g^{\bar ba}T_{ai}^mR_{m\bar bj}^k-g^{\bar ba}T_{aj}^mR_{m\bar bi}^k)T_{\bar p\bar q}^{\bar r}\\
+&g^{\bar pi}g^{\bar qj}g_{k\bar r}T_{ij}^k(-\nabla_{\bar p}\tilde R_{\bar q}^{\bar r}+\nabla_{\bar q}\tilde R_{\bar p}^{\bar r}+g^{\bar ba}T_{\bar b\bar p}^{\bar n}R_{\bar na\bar q}^{\bar r}-g^{\bar ba}T_{\bar b\bar q}^{\bar n}R_{\bar na\bar p}^{\bar r})\\+&g^{\bar pi}g^{\bar qj}g_{k\bar r}g^{\bar ba}T_{ij}^k(R_{\bar ba\bar p}^{\bar n}T_{\bar n\bar q}^{\bar r}+R_{\bar ba\bar q}^{\bar n}T_{\bar p\bar n}^{\bar r}-R_{\bar ba\bar n}^{\bar r}T_{\bar p\bar q}^{\bar n})\,.
\end{split}\end{equation*}
It follows 
\begin{equation*}\begin{split}
    \square\,|T|^2_g=&-|\nabla T|^2_g-|\bar\nabla T|^2_g+g^{\bar pi}g^{\bar qj}g_{k\bar r}\nabla_i(\dot g_j^k+\tilde R_j^k)T_{\bar p\bar q}^{\bar r}-g^{\bar pi}g^{\bar qj}g_{k\bar r}\nabla_j(\dot g_i^k+\tilde R_i^k)T_{\bar p\bar q}^{\bar r}\\&+g^{\bar pi}g^{\bar qj}g_{k\bar r}T_{ij}^k\nabla_{\bar p}(\dot g_{\bar q}^{\bar r}+\tilde R_{\bar q}^{\bar r})-g^{\bar pi}g^{\bar qj}g_{k\bar r}T_{ij}^k\nabla_{\bar q}(\dot g_{\bar p}^{\bar r}+\tilde R_{\bar p}^{\bar r})-g^{\bar bi}g^{\bar qj}g_{k\bar r}T_{ij}^k(\dot g_{\bar b}^{\bar p}+\tilde R_{\bar b}^{\bar p})T_{\bar p\bar q}^{\bar r}\\&-g^{\bar pi}g^{\bar bj}T_{ij}^k(\dot g_{\bar b}^{\bar q}+\tilde R_{\bar b}^{\bar p})T_{\bar p\bar q}^{\bar r}+ g^{\bar pi}g^{\bar qj}T_{ij}^k(\dot g_{k\bar r}+\tilde R_{k\bar r})T_{\bar p\bar q}^{\bar r}\\&+g^{\bar pi}g^{\bar qj}g^{\bar ba}g_{k\bar r}(T_{aj}^mT_{\bar p\bar q}^{\bar r}R_{m\bar bi}^k-T_{ai}^mT_{\bar p\bar q}^{\bar r}R_{m\bar bj}^k+T_{ij}^kT_{\bar b\bar q}^{\bar n}R_{\bar na\bar p}^{\bar r}-T_{ij}^kT_{\bar b\bar p}^{\bar n}R_{\bar na\bar q}^{\bar r})\,.
\end{split}\end{equation*}
Finally,
\begin{equation*}
\begin{split}
\square\,|T|^2_g=&-|\nabla T|^2_g-|\bar\nabla T|^2_g\\
&+2g^{\bar pi}g^{\bar qj}T_{\bar p\bar q}^{\bar r}\nabla_i(\dot g_{j\bar r}+\tilde R_{j\bar r})+2g^{\bar pi}g^{\bar qj}T_{ij}^k\nabla_{\bar p}(\dot g_{k\bar q}+\tilde R_{k\bar q})\\
&-g^{\bar bi}g^{\bar qj}g_{k\bar r}T_{ij}^kT_{\bar p\bar q}^{\bar r}(\dot g_{\bar b}^{\bar p}+\tilde R_{\bar b}^{\bar p})\\
&-g^{\bar pi}g^{\bar bj}T_{ij}^kT_{\bar p\bar q}^{\bar r}(\dot g_{\bar b}^{\bar q}+\tilde R_{\bar b}^{\bar p})+ g^{\bar pi}g^{\bar qj}T_{ij}^kT_{\bar p\bar q}^{\bar r}(\dot g_{k\bar r}+\tilde R_{k\bar r})\\
&+2g^{\bar pi}g^{\bar qj}g^{\bar ba}g_{k\bar r}(T_{aj}^mT_{\bar p\bar q}^{\bar r}R_{m\bar bi}^k+T_{ij}^kT_{\bar b\bar q}^{\bar n}R_{\bar na\bar p}^{\bar r})\,,
\end{split}
\end{equation*}
and the claim follows. 
\end{proof}

\begin{lemma}\label{evcurv} We have
\begin{equation*}
    \begin{split}
        \square\,|\operatorname{Rm}|_g^2 =&-|\nabla\operatorname{Rm}|_g^2-|\bar\nabla\operatorname{Rm}|_g^2-2\operatorname{Re}\left[g^{\bar ni}g^{\bar jm}g^{\bar pk}\nabla_{\bar j}\nabla_i(\dot g_{k\bar q}+\tilde R_{k\bar q})R_{\bar nm\bar p}^{\bar q}\right] \\&-g^{\bar ni}g^{\bar jm}g^{\bar pk}g_{l\bar q}R_{i\bar jk}^l(\dot g_{\bar n}^{\bar r}+\tilde R_{\bar r}^{\bar n})R_{\bar rm\bar p}^{\bar  q}-g^{\bar ni}g^{\bar jm}g^{\bar pk}g_{l\bar q}R_{i\bar jk}^l(\dot g_{\bar p}^{\bar r}+\tilde R_{\bar p}^{\bar q})R_{\bar nm\bar r}^{\bar q}\\&+g^{\bar ni}g^{\bar jm}g^{\bar pk}R_{i\bar jk}^l(\dot g_{l\bar r}+\tilde R_{l\bar r})R_{\bar nm\bar p}^{\bar r}-g^{\bar ni}g^{\bar ja}g^{\bar pk}g_{l\bar q}R_{i\bar jk}^l(\dot g_a^m+\tilde R_a^m)R_{\bar nm\bar p}^q\\&+2\operatorname{Im}\left[g^{\bar ni}g^{\bar jm}g^{\bar pk}g_{l\bar q}^{\bar ba}T_{ai}^s\nabla_{\bar j}R_{s\bar bk}^lR_{\bar nm\bar p}^{\bar q}\right]-2\operatorname{Im}\left[g^{\bar ni}g^{\bar jm}g^{\bar pk}g_{l\bar q}^{\bar ba}R_{i\bar jk}^lT_{am}^s\nabla_{\bar b}R_{\bar ns\bar p}^{\bar q}\right]\\&+2\operatorname{Im}\left[g^{\bar ni}g^{\bar jm}g^{\bar pk}g_{l\bar q}g^{\bar ba}(-R_{a\bar js}^lR_{i\bar bk}^s+R_{i\bar ja}^sR_{s\bar bk}^l+R_{a\bar jk}^sR_{i\bar bs}^l)R_{\bar nm\bar p}^{\bar q}\right]\,.
    \end{split}
\end{equation*}
\end{lemma}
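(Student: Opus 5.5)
We compute $\partial_t|\mathrm{Rm}|_g^2$ and $\Delta|\mathrm{Rm}|_g^2$ separately, following the scheme of Proposition \ref{BoxS} and Lemma \ref{boxT}. In local coordinates,
$$
|\mathrm{Rm}|^2_g=g^{\bar ni}g^{\bar jm}g^{\bar pk}g_{l\bar q}R_{i\bar jk}^lR_{\bar nm\bar p}^{\bar q}.
$$

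\textbf{Time derivative.} Differentiating the four metric factors gives three terms with $-\dot g^{\cdot}_{\cdot}$ contracted against $\mathrm{Rm}\otimes\overline{\mathrm{Rm}}$, and one term with $+\dot g_{l\bar r}$. The remaining piece is $\dot R_{i\bar jk}^l$. Since $R_{i\bar jk}^l=-\partial_{\bar j}\Gamma^l_{ik}$ and $\dot\Gamma_{ik}^l=\nabla_i\dot g_k^l$ (as computed in the proof of Proposition \ref{BoxS}), we get
$$
\dot R_{i\bar jk}^l=-\nabla_{\bar j}\nabla_i\dot g_k^l .
$$
This term and its conjugate produce $-2{\rm Re}[\dots\nabla_{\bar j}\nabla_i\dot g_{k\bar q}\,\overline{\mathrm{Rm}}]$, after lowering the index with $g$, which is parallel.

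\textbf{Laplacian.} We write
$$
\Delta|\mathrm{Rm}|^2=|\nabla \mathrm{Rm}|^2+|\bar\nabla\mathrm{Rm}|^2+\langle g^{\bar ba}\nabla_a\nabla_{\bar b}\mathrm{Rm},\overline{\mathrm{Rm}}\rangle+\langle \mathrm{Rm},g^{\bar ba}\nabla_a\nabla_{\bar b}\overline{\mathrm{Rm}}\rangle .
$$
For the first inner term we use the second Bianchi identity for the Chern connection twice.

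\begin{itemize}
\item First apply $\nabla_{\bar b}R_{i\bar jk}^l=\nabla_{\bar j}R_{i\bar bk}^l+T^{\bar s}_{\bar b\bar j}R_{i\bar sk}^l$ to move $\bar b$ into the slot where it can be traced.
\item Then commute $\nabla_a$ past $\nabla_{\bar j}$ using the Ricci identity. This produces $RR$ commutator terms, one for each index of $R_{i\bar bk}^l$.
\item Next apply $\nabla_aR_{i\bar bk}^l=\nabla_iR_{a\bar bk}^l+T^s_{ai}R_{s\bar bk}^l$.
\end{itemize}

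Contracting with $g^{\bar ba}$ turns $\nabla_{\bar j}\nabla_iR_{a\bar bk}^l$ into $\nabla_{\bar j}\nabla_i\tilde R_k^l$. This produces torsion terms of the form $T*\nabla\mathrm{Rm}$ and the cubic terms $-R_{a\bar js}^lR_{i\bar bk}^s+R_{i\bar ja}^sR_{s\bar bk}^l+R_{a\bar jk}^sR_{i\bar bs}^l$. The conjugate term is handled the same way, after using $[\nabla_a,\nabla_{\bar b}]$ to write $\nabla_a\nabla_{\bar b}=\nabla_{\bar b}\nabla_a+\mathrm{Rm}*$ on $\overline{\mathrm{Rm}}$. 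That commutator contributes $\tilde R$-contractions against $\mathrm{Rm}\otimes\overline{\mathrm{Rm}}$.

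\textbf{Combining.} In $\square=\partial_t-\Delta$, the $\nabla\nabla\tilde R$ terms pair with the $\nabla\nabla\dot g$ terms to give $\nabla_{\bar j}\nabla_i(\dot g+\tilde R)$. Likewise, the $\tilde R$ commutator terms pair with the metric-variation terms to give the four $(\dot g+\tilde R)$ contractions. What remains are the torsion terms and the cubic curvature terms, which are grouped with their conjugates.

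\textbf{Main obstacle.} The hard part is the bookkeeping: ordering the covariant derivatives consistently, and identifying which combination of a term and its conjugate appears. The statement writes these as ${\rm Im}$ rather than ${\rm Re}$, which reflects the sign conventions for $i$ in the commutator. I would check this carefully, and verify the signs on a known example, for instance the Kähler case, where $T=0$ and the formula must reduce to the classical $\square|\mathrm{Rm}|^2$ identity.
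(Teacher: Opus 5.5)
Your route is the paper's: metric variations plus $\dot R^l_{i\bar jk}=-\nabla_{\bar j}\nabla_i\dot g^l_k$ for $\partial_t$, and for $g^{\bar ba}\nabla_a\nabla_{\bar b}R^l_{i\bar jk}$ a Bianchi identity, a commutator, then a second Bianchi identity, with a commutator for the conjugate factor. The time-derivative part is fine. The Laplacian step, however, does not produce the stated terms as you describe it.

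\textbf{Signs and missing torsion-derivative terms.} First, with the paper's convention $T^k_{ij}=\Gamma^k_{ij}-\Gamma^k_{ji}$, both Bianchi identities carry a minus sign:
$\nabla_aR^l_{i\bar bk}=\nabla_iR^l_{a\bar bk}-T^s_{ai}R^l_{s\bar bk}$ and $\nabla_{\bar b}R^l_{i\bar jk}=\nabla_{\bar j}R^l_{i\bar bk}-T^{\bar s}_{\bar b\bar j}R^l_{i\bar sk}$. This is the form used in Proposition \ref{BoxS} and Lemma \ref{boxT}.

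Second, and more seriously, you drop the product-rule terms in which the outer derivative lands on the torsion, namely $-(\nabla_{\bar j}T^s_{ai})R^l_{s\bar bk}$ and $-(\nabla_aT^{\bar r}_{\bar b\bar j})R^l_{i\bar rk}$. These must be rewritten with $\nabla_{\bar b}T^k_{ij}=-R^k_{i\bar bj}+R^k_{j\bar bi}$ and its conjugate, and they are exactly what makes the bookkeeping close:
\begin{itemize}
\item The commutator $[\nabla_a,\nabla_{\bar j}]R^l_{i\bar bk}$ has four terms, one per index. Its $i$- and $\bar b$-terms are $-R^s_{a\bar ji}R^l_{s\bar bk}$ and $R^{\bar r}_{\bar ja\bar b}R^l_{i\bar rk}$.
\item Only after adding the $\bar\nabla T$ terms do these become $-R^s_{i\bar ja}R^l_{s\bar bk}$ and $R^{\bar r}_{\bar ba\bar j}R^l_{i\bar rk}$. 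The first is the source of the cubic term $R^s_{i\bar ja}R^l_{s\bar bk}$ in the statement. The $g^{\bar ba}$-trace of the second is a $\widetilde{\rm Ric}$-term, which joins the $(\dot g+\tilde R)$-contractions.
\item Without them, $g^{\bar ba}R^{\bar r}_{\bar ja\bar b}$ is a mixed trace, in general neither ${\rm Ric}$ nor $\widetilde{\rm Ric}$ when $T\neq0$. Your pairing of $\tilde R$-terms with $\dot g$-terms then fails, and with your plus signs the cancellation fails as well.
\end{itemize}
So the three cubic terms you list are copied from the statement rather than derived.

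\textbf{The ${\rm Im}$ in the statement.} Your explanation of the ${\rm Im}$ is also off: no factor $i$ enters anywhere. Since $g^{\bar ba}\nabla_a\nabla_{\bar b}\overline{\rm Rm}=\overline{g^{\bar ba}\nabla_{\bar b}\nabla_a{\rm Rm}}$, the torsion and cubic terms come paired with their complex conjugates. The computation therefore yields $2{\rm Re}$, exactly as for the $\nabla_{\bar j}\nabla_i(\dot g+\tilde R)$ term.

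The K\"ahler check you propose confirms that the displayed ${\rm Im}$ must be read as ${\rm Re}$. Take the shrinking Fubini--Study solution $g(t)=(1-\lambda t)g_0$ of $\dot g=-{\rm Ric}$. There $T=0$, $\dot g+\tilde R=0$ and $\nabla{\rm Rm}=0$. Every complete contraction of ${\rm Rm}$ is real (constant holomorphic sectional curvature), so the right-hand side as displayed vanishes. Yet $|{\rm Rm}|^2_{g(t)}=(1-\lambda t)^{-2}|{\rm Rm}|^2_{g_0}$ has nonzero time derivative and zero Laplacian, so the left-hand side does not vanish.

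This does not affect the later use in the proof of Theorem \ref{ThH=0}, which only needs the schematic form $T*\nabla{\rm Rm}*{\rm Rm}+{\rm Rm}*{\rm Rm}*{\rm Rm}$. You should aim for ${\rm Re}$ rather than try to reproduce ${\rm Im}$.
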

\begin{proof}
 We directly compute
\begin{equation*}\begin{split}
\partial_t|{\rm Rm}|_g^2=&\partial_t\left[g^{\bar ni}g^{\bar jm}g^{\bar pk}g_{l\bar q}R_{i\bar jk}^lR_{\bar nm\bar p}^{\bar q}\right]\\=&-g^{\bar na}g^{\bar bi}\dot g_{a\bar b}g^{\bar jm}g^{\bar pk}g_{l\bar q}R_{i\bar jk}^lR_{\bar nm\bar p}^{\bar q}-g^{\bar ni}g^{\bar ja}g^{\bar bm}\dot g_{a\bar b}g^{\bar pk}g_{l\bar k}R_{i\bar jk}^lR_{\bar nm\bar p}^{\bar q}\\
&-g^{\bar ni}g^{\bar jm}g^{\bar pa}g^{\bar bk}\dot g_{a\bar b}g_{l\bar k}R_{i\bar jk}^lR_{\bar nm\bar p}^{\bar q}+g^{\bar ni}g^{\bar jm}g^{\bar pk}\dot g_{l\bar q}R_{i\bar jk}^lR_{\bar nm\bar p}^{\bar q}\\
&-g^{\bar ni}g^{\bar jm}g^{\bar pk}g_{l\bar q}\nabla_{\bar j}\nabla_i\dot g_{k}^lR_{\bar nm\bar p}^{\bar q}-g^{\bar ni}g^{\bar jm}g^{\bar pk}g_{l\bar q}R_{i\bar jk}^l\nabla_m\nabla_{\bar n}\dot g_{\bar p}^{\bar q}   \,.
\end{split}
\end{equation*}
Furthermore,
\begin{equation*}\begin{split}
[\nabla_a,\nabla_{\bar b}]R_{\bar nm\bar p}^{\bar q}=R_{\bar ba\bar n}^{\bar r}R_{\bar rm\bar p}^{\bar q}+R_{\bar ba\bar p}^{\bar r}R_{\bar nm\bar r}^{\bar q}-R_{\bar ba\bar r}^{\bar q}R_{\bar nm\bar p}^{\bar r}-R_{a\bar bm}^sR_{\bar ns\bar p}^{\bar q}\,,   
\end{split}
\end{equation*}
and
\begin{equation*}
    [\nabla_a,\nabla_{\bar j}]R_{i\bar bk}^l=R_{a\bar js}^lR_{i\bar bk}^s+R_{\bar ja\bar b}^{\bar r}R_{i\bar rk}^l-R_{a\bar ji}^sR_{s\bar bk}^l-R_{a\bar jk}^sR_{i\bar bs}^l\,,
\end{equation*}hence
\begin{equation*}\begin{split}
g^{\bar ba}\nabla_a\nabla_{\bar b}R_{i\bar jk}^l=
&\nabla_{\bar j}\nabla_i\tilde R_k^l-g^{\bar ba}T_{ai}^s\nabla_{\bar j}R_{s\bar bk}^l -g^{\bar ba}(\nabla_aR_{i\bar rk}^l)T_{\bar b\bar j}^{\bar r}\\
+&g^{\bar ba}(R_{a\bar js}^lR_{i\bar bk}^s+R_{\bar ba\bar j}^{\bar r}R_{i\bar rk}^l-R_{i\bar ja}^sR_{s\bar bk}^l-R_{a\bar jk}^sR_{i\bar bs}^l)\,.
\end{split}\end{equation*}and\begin{equation*}
\begin{split}
g^{\bar ba}\nabla_{\bar b}\nabla_aR_{\bar n m\bar p}^{\bar q}&= \nabla_m\nabla_{\bar n}\tilde R_{\bar p}^{\bar q}-g^{\bar ba}\nabla_mR_{\bar ra\bar p}T_{\bar b\bar n}^{\bar r}-g^{\bar ba}T_{am}^s\nabla_{\bar b}R_{\bar ns\bar p}^{\bar q} \\
&+g^{\bar ba}(R_{a\bar bm}^sR_{\bar ns\bar p}^{\bar q}-R_{\bar nm\bar b}R_{\bar ra\bar p}^{\bar q}+R_{\bar bm\bar r}^{\bar q}R_{\bar na\bar p}^{\bar r}-R_{\bar bm\bar p}^{\bar r}R_{\bar na\bar r}^{\bar q})\,.
\end{split}
\end{equation*}
Then,
\begin{equation*}\begin{split}
\Delta\,|{\rm Rm}|_g^2=
&\,|\nabla\operatorname{Rm}|^2+|\bar\nabla\operatorname{Rm}|^2+g^{\bar ni}g^{\bar jm}g^{\bar pk}g_{l\bar q}\left(g^{\bar ba}\nabla_a\nabla_{\bar b}R_{i\bar jk}^lR_{\bar nm\bar p}^{\bar q}+g^{\bar ba}R_{i\bar jk}^l\nabla_a\nabla_{\bar b}R_{\bar nm\bar p}^{\bar q}\right) \\ 
=&\,|\nabla\operatorname{Rm}|_g^2+|\bar\nabla\operatorname{Rm}|_g^2+g^{\bar ni}g^{\bar jm}g^{\bar pk}g_{l\bar q}\left(g^{\bar ba}\nabla_a\nabla_{\bar b}R_{i\bar jk}^lR_{\bar nm\bar p}^{\bar q}+g^{\bar ba}R_{i\bar jk}^l\nabla_{\bar b}\nabla_aR_{\bar nm\bar p}^{\bar q}\right)\\
&+g^{\bar ni}g^{\bar jm}g^{\bar pk}g_{l\bar q}g^{\bar ba}R_{i\bar jk}^l\left(R_{\bar ba\bar n}^{\bar r}R_{\bar rm\bar p}^{\bar q}+R_{\bar ba\bar p}^{\bar r}R_{\bar nm\bar r}^{\bar q}-R_{\bar ba\bar r}^{\bar q}R_{\bar nm\bar p}^{\bar r}-R_{a\bar bm}^sR_{\bar ns\bar p}^{\bar q}\right) \\
=&\,|\nabla\operatorname{Rm}|_g^2+|\bar\nabla\operatorname{Rm}|_g^2+g^{\bar ni}g^{\bar jm}g^{\bar pk}g_{l\bar q}\nabla_{\bar j}\nabla_i\tilde R_k^lR_{\bar nm\bar p}^{\bar q}+g^{\bar ni}g^{\bar jm}g^{\bar pk}g_{l\bar q}R_{i\bar jk}^l\nabla_m\nabla_{\bar n}\tilde R_{\bar p}^{\bar q}\\
&+g^{\bar ni}g^{\bar jm}g^{\bar pk}g_{l\bar q}\left[-g^{\bar ba}T_{ai}^s\nabla_{\bar j}R_{s\bar bk}^l -g^{\bar ba}(\nabla_aR_{i\bar rk}^l)T_{\bar b\bar j}^{\bar r}+g^{\bar ba}(R_{a\bar js}^lR_{i\bar bk}^s+R_{\bar ba\bar j}^{\bar r}R_{i\bar rk}^l\right.\\
&-\left.R_{i\bar ja}^sR_{s\bar bk}^l-R_{a\bar jk}^sR_{i\bar bs}^l)\right]R_{\bar nm\bar p}^{\bar q}-g^{\bar ni}g^{\bar jm}g^{\bar pk}g_{l\bar q}R_{i\bar jk}^l\left[-g^{\bar ba}\nabla_mR_{\bar ra\bar p}T_{\bar b\bar n}^{\bar r}\right.\\
&\left.-g^{\bar ba}T_{am}^s\nabla_{\bar b}R_{\bar ns\bar p}^{\bar q} +g^{\bar ba}(R_{a\bar bm}^sR_{\bar ns\bar p}^{\bar q}-R_{\bar nm\bar b}R_{\bar ra\bar p}^{\bar q}+R_{\bar bm\bar r}^{\bar q}R_{\bar na\bar p}^{\bar r}-R_{\bar bm\bar p}^{\bar r}R_{\bar na\bar r}^{\bar q})\right]\\
&+g^{\bar ni}g^{\bar jm}g^{\bar pk}g_{l\bar q}g^{\bar ba}R_{i\bar jk}^l\left(R_{\bar ba\bar n}^{\bar r}R_{\bar rm\bar p}^{\bar q}+R_{\bar ba\bar p}^{\bar r}R_{\bar nm\bar r}^{\bar q}-R_{\bar ba\bar r}^{\bar q}R_{\bar nm\bar p}^{\bar r}-R_{a\bar bm}^sR_{\bar ns\bar p}^{\bar q}\right)\,.
\end{split}\end{equation*}
It follows 
\begin{equation*}
\begin{split}
\square\,|\operatorname{Rm}|_g^2 =
&-|\nabla\operatorname{Rm}|_g^2-|\bar\nabla\operatorname{Rm}|_g^2-2\operatorname{Re}\left[g^{\bar ni}g^{\bar jm}g^{\bar pk}\nabla_{\bar j}\nabla_i(\dot g_{k\bar q}+\tilde R_{k\bar q})R_{\bar nm\bar p}^{\bar q}\right] \\
&-g^{\bar ni}g^{\bar jm}g^{\bar pk}g_{l\bar q}R_{i\bar jk}^l(\dot g_{\bar n}^{\bar r}+\tilde R_{\bar r}^{\bar n})R_{\bar rm\bar p}^{\bar  q}-g^{\bar ni}g^{\bar jm}g^{\bar pk}g_{l\bar q}R_{i\bar jk}^l(\dot g_{\bar p}^{\bar r}+\tilde R_{\bar p}^{\bar q})R_{\bar nm\bar r}^{\bar q}\\
&+g^{\bar ni}g^{\bar jm}g^{\bar pk}R_{i\bar jk}^l(\dot g_{l\bar r}+\tilde R_{l\bar r})R_{\bar nm\bar p}^{\bar r}-g^{\bar ni}g^{\bar ja}g^{\bar pk}g_{l\bar q}R_{i\bar jk}^l(\dot g_a^m+\tilde R_a^m)R_{\bar nm\bar p}^q\\
&-g^{\bar ni}g^{\bar jm}g^{\bar pk}g_{l\bar q}\left[-g^{\bar ba}T_{ai}^s\nabla_{\bar j}R_{s\bar bk}^l -g^{\bar ba}(\nabla_aR_{i\bar rk}^l)T_{\bar b\bar j}^{\bar r}+g^{\bar ba}R_{a\bar js}^lR_{i\bar bk}^s\right.\\
-&\left.R_{i\bar ja}^sR_{s\bar bk}^l-R_{a\bar jk}^sR_{i\bar bs}^l)\right]R_{\bar nm\bar p}^{\bar q}+g^{\bar ni}g^{\bar jm}g^{\bar pk}g_{l\bar q}R_{i\bar jk}^l\left[-g^{\bar ba}\nabla_mR_{\bar ra\bar p}T_{\bar b\bar n}^{\bar r}-g^{\bar ba}T_{am}^s\nabla_{\bar b}R_{\bar ns\bar p}^{\bar q} \right.\\
&\left.+g^{\bar ba}(R_{a\bar bm}^sR_{\bar ns\bar p}^{\bar q}-R_{\bar nm\bar b}R_{\bar ra\bar p}^{\bar q}+R_{\bar bm\bar r}^{\bar q}R_{\bar na\bar p}^{\bar r}-R_{\bar bm\bar p}^{\bar r}R_{\bar na\bar r}^{\bar q})\right]\\
+&g^{\bar ni}g^{\bar jm}g^{\bar pk}g_{l\bar q}g^{\bar ba}R_{i\bar jk}^lR_{a\bar bm}^sR_{\bar ns\bar p}^{\bar q}\,,
    \end{split}
\end{equation*}
which implies the claim. 
\end{proof}


%

\section{General Estimates}
Let us consider the same setting of the previous section: a compact Hermitian manifold $(M,\hat g)$ with a smooth curve $g=g(t)$ of Hermitian metrics. We assume that there exists a positive constant $K$ such that 
$$
\frac{1}{K}\hat g\leq g(t)\leq K\hat g\,,
$$ 
for every $t$ in the time domain of $g$. We further denote by 
$C$ a positive constant which may depend only on $M$, $\hat g$, $K$ and $g(0)$. The constant $C$ will be
used repeatedly and may change from line to line.
%
\begin{lemma}
We have 
\begin{eqnarray}
 &&\label{prima} |T|_g\leq  C(S^{1/2}+1)\,,\\
&&\label{seconda} |T|^2_g\leq C(S+1)\,,\\
&&\label{terza} |\nabla T|_{g}\leq C(|\nabla \Psi|_g+S^{1/2}+1)\,,\\
&&\label{quarta} |\bar \nabla T|_{g}\leq C(|\bar \nabla \Psi|_g+S^{1/2}+1)\,,\\
&&\label{quinta}|\nabla \bar \nabla T|_g\leq C\left(\left|\nabla\bar \Psi\right|_g|T|_g+S^{1/2}\left|\hat\nabla T\right|_g+S|T|_g+\left|\hat \nabla\bar{\hat\nabla}T \right|_g\right)
\end{eqnarray}
\end{lemma}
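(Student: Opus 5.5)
The plan is to express everything through the fixed background data $\hat g$, $\hat\nabla$, $\hat T$, $\hat R$ and the difference tensor $\Psi=\nabla-\hat\nabla$. Uniform equivalence \eqref{ass1} then turns each $g$-norm of a fixed background tensor into a uniform constant. Concretely, for any fixed tensor $A$ built from $\hat g$ (such as $\hat T$, $\hat\nabla\hat T$, $\bar{\hat\nabla}\hat T$), the norms $|A|_g$ and $|A|_{\hat g}$ are comparable with constants depending only on $K$ and the tensor type. Hence $|A|_g\le C$ by compactness of $M$. The same comparability lets us move freely between $g$- and $\hat g$-norms of any tensor, such as $\Psi$ and its derivatives, at the cost of a constant.

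For \eqref{prima} and \eqref{seconda} I use the first lemma of Section \ref{GFormulas}:
$$T_{ij}^k=\hat T_{ij}^k+\Psi_{ij}^k-\Psi_{ji}^k.$$
This gives $|T|_g\le |\hat T|_g+2|\Psi|_g\le C+2S^{1/2}$, which is \eqref{prima}. Squaring and using $2ab\le a^2+b^2$ gives \eqref{seconda}.

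For \eqref{terza} and \eqref{quarta} I differentiate the same identity. The term $\nabla(\Psi_{ij}^k-\Psi_{ji}^k)$ is bounded by $2|\nabla\Psi|_g$. For the background term I write $\nabla\hat T=\hat\nabla\hat T+\Psi*\hat T$, since $\nabla-\hat\nabla=\Psi$ acting on each unbarred index. This gives $|\nabla\hat T|_g\le C+C S^{1/2}$. Similarly, $\bar\nabla\hat T=\bar{\hat\nabla}\hat T+\bar\Psi*\hat T$, because on barred indices the two connections differ by $\bar\Psi$ and $|\bar\Psi|_g=|\Psi|_g$. Together these give \eqref{terza} and \eqref{quarta}.

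For \eqref{quinta} I expand $\nabla\bar\nabla T$ around the background connection in two steps:
$$\bar\nabla T=\bar{\hat\nabla}T+\bar\Psi*T,\qquad \nabla\bar\nabla T=\nabla\bar{\hat\nabla}T+\nabla\bar\Psi*T+\bar\Psi*\nabla T .$$
Then I substitute
$$\nabla\bar{\hat\nabla}T=\hat\nabla\bar{\hat\nabla}T+\Psi*\bar{\hat\nabla}T,\qquad \nabla T=\hat\nabla T+\Psi*T.$$
Each resulting term is estimated by Cauchy--Schwarz with constants depending on the dimension only:
\begin{itemize}
\item $|\nabla\bar\Psi|_g\,|T|_g$;
\item $S^{1/2}\,|\hat\nabla T|_g$, where $|\hat\nabla T|_g$ is read as the full $\hat\nabla$-derivative, i.e.\ both $\hat\nabla T$ and $\bar{\hat\nabla}T$; this absorbs $\Psi*\bar{\hat\nabla}T$;
\item $S\,|T|_g$, from $\bar\Psi*\Psi*T$;
\item $|\hat\nabla\bar{\hat\nabla}T|_g$.
\end{itemize}
This is \eqref{quinta}.

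The only delicate point is bookkeeping rather than analysis. One must check that $\nabla-\hat\nabla$ on mixed tensors really has the form $\Psi*(\cdot)+\bar\Psi*(\cdot)$, with $\Psi$ acting on $(1,0)$ slots and $\bar\Psi$ on $(0,1)$ slots, since both connections are Chern connections and preserve types. One must also keep track of which hatted derivative, $\hat\nabla T$ or $\bar{\hat\nabla}T$, appears multiplied by $S^{1/2}$. If the statement is meant with only $\hat\nabla T$ there, I would instead convert $\bar{\hat\nabla}T=\bar\nabla T-\bar\Psi*T$ and absorb the result into the other terms, with the term $S|T|_g$ catching the extra $\bar\Psi*\Psi*T$.
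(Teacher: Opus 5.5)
Your proposal is correct and is essentially the paper's own proof. It uses the same identity $T-\hat T=\Psi_{ij}^k-\Psi_{ji}^k$, the same rewritings $\nabla\hat T=\hat\nabla\hat T+\Psi*\hat T$ and $\bar\nabla\hat T=\bar{\hat\nabla}\hat T+\bar\Psi*\hat T$, and the same two-step expansion of $\nabla\bar\nabla T$; the paper also silently absorbs $\Psi*\bar{\hat\nabla}T$ into $S^{1/2}|\hat\nabla T|_g$, i.e.\ it reads $\hat\nabla T$ as the full derivative, exactly as you do. Drop your fallback, though: since both connections are Chern, $\nabla\bar\nabla T=\hat\nabla\bar{\hat\nabla}T+\Psi*\bar{\hat\nabla}T$ exactly, so at a point where $T$, the $(1,0)$-part of $\hat\nabla T$ and $\hat\nabla\bar{\hat\nabla}T$ vanish but $\Psi*\bar{\hat\nabla}T\neq0$ (which can be arranged, e.g.\ for flat $\hat g$ in dimension $2$), the right-hand side of the $(1,0)$-only version is zero while the left-hand side is not, so that version is false and no rearrangement can prove it.
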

\begin{proof}
Since 
$$
T_{ij}^k-\hat T_{ij}^k=\Psi_{ij}^k-\Psi_{ji}^k\,,
$$
we get 
$$
|T|_g\leq 2S^{1/2}+|\hat T|_g
$$
and \eqref{prima} and \eqref{seconda} follow. Finally, 
$$
|\nabla T|_g\leq |\nabla (T-\hat T)|_g+ |\nabla \hat T|_g\leq C\,|\nabla \Psi|_g+|\nabla \hat T|_g\,. 
$$
Then writing 
$$
\nabla \hat T=\Psi *\hat T+\hat\nabla \hat T\,,
$$
\eqref{terza}. 

Similarly, 
$$
|\bar \nabla T|_g\leq |\bar \nabla (T-\hat T)|_g+ |\bar \nabla \hat T|_g\leq C\,(|\bar \nabla \Psi|_g+S^{1/2}+1)\,,  
$$
and  \eqref{quarta} follows. 

Finally 
$$
\begin{aligned}
|\nabla \bar \nabla T|_g=&\left|\nabla\left(\bar \Psi* T+\bar {\hat \nabla} T\right)\right|_g=
\left|\nabla\bar\Psi* T+\bar\Psi*\nabla T+\nabla \bar {\hat \nabla} T\right|_g\\
\leq &C\left|\nabla\bar \Psi\right|_g|T|_g+\left|\bar \Psi*\hat\nabla  T+\bar \Psi*\Psi * T + \Psi* \bar{\hat \nabla} T+\hat \nabla \bar{\hat \nabla}  T)\right|_g\\
\leq &C\left(\left|\nabla\bar \Psi\right|_g|T|_g+S^{1/2}\left|\hat\nabla T\right|_g+S|T|_g+\left|\hat \nabla\bar{\hat\nabla}T \right|_g\right)
\end{aligned}
$$
and \eqref{quinta} follows. 
\end{proof}

\begin{prop}\label{lemma4.2}
We have 
 $$
\begin{aligned}
\Box S\leq & -|\nabla \Psi|_g^2-|\bar \nabla \Psi|_g^2+C|\nabla(\dot g+\tilde R)|_gS^{1/2}+C|\dot g+\tilde R|_gS\\
&+C |T|_g |\bar \nabla \Psi|_g S^{1/2}+C(1+S)\,.
\end{aligned}
$$   
\end{prop}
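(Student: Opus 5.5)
The plan is to start from the exact evolution formula of Proposition \ref{BoxS} and bound it term by term. We only use the uniform equivalence \eqref{ass1}, which makes all $\hat g$-tensors (namely $\hat R$, $\hat\nabla\hat R$, $\hat T$) bounded in $g$-norm by constants depending on $K$ and $\hat g$. The two good terms $-|\nabla\Psi|_g^2-|\bar\nabla\Psi|_g^2$ are kept unchanged. Every other term is a contraction with $g$ and $g^{-1}$, so Cauchy--Schwarz bounds it by a product of $g$-norms, up to a constant depending only on the dimension.

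The terms are handled in the following order.

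\begin{enumerate}
\item The term $2{\rm Re}(g^{\bar mi}g^{\bar pj}\nabla_i(\dot g_{j\bar q}+\tilde R_{j\bar q})\Psi^{\bar q}_{\bar m\bar p})$ is bounded by $C|\nabla(\dot g+\tilde R)|_g|\Psi|_g=C|\nabla(\dot g+\tilde R)|_gS^{1/2}$.
\item The three terms that are quadratic in $\Psi$ and linear in $\dot g+\tilde R$ (with indices raised by $g$) are each bounded by $C|\dot g+\tilde R|_g|\Psi|_g^2=C|\dot g+\tilde R|_gS$.
\item The torsion term $2{\rm Re}(g^{\bar mi}g^{\bar pj}g^{\bar ba}T_{ai}^k\nabla_{\bar b}\Psi_{kj\bar q}\Psi^{\bar q}_{\bar m\bar p})$ is bounded by $C|T|_g|\bar\nabla\Psi|_gS^{1/2}$. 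This is exactly the corresponding term in the claimed inequality, so it is left as is.
\item The term $-2{\rm Re}(g^{\bar mi}g^{\bar pj}g^{\bar ba}T^k_{ai}\hat R_{k\bar b j\bar q}\Psi^{\bar q}_{\bar m\bar p})$ is bounded by $C|T|_g|\hat R|_gS^{1/2}\le C|T|_gS^{1/2}$. By \eqref{prima} this is at most $C(S^{1/2}+1)S^{1/2}\le C(1+S)$.
\end{enumerate}

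The last term contains $\nabla_a\hat R$, a derivative of a background tensor taken with the $g$-Chern connection, so it needs one more step. I would write $\nabla\hat R=\hat\nabla\hat R+\Psi*\hat R$ (with $\bar\Psi$-terms also appearing for the conjugate slots). By \eqref{ass1} this gives $|\nabla\hat R|_g\le C(1+S^{1/2})$, and hence the term is bounded by $C(1+S^{1/2})S^{1/2}\le C(1+S)$.

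Adding up the bounds gives the stated inequality. The only step needing care is the last one, because there the connection difference $\Psi$ reappears and one must check that it raises the power of $S$ by at most one. Everything else is a direct application of Cauchy--Schwarz together with \eqref{ass1}.
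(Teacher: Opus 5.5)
Your proposal is correct and is essentially the paper's proof. The $\nabla\hat R$ term is split as $\hat\nabla\hat R+\Psi*\hat R$, and the $T*\hat R*\Psi$ term is absorbed into $C(1+S)$ via the preliminary torsion bound; you use \eqref{prima} where the paper uses \eqref{seconda} with Young's inequality, which is equivalent. All remaining terms are handled by Cauchy--Schwarz, as in the paper.

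One small remark: both Chern connections differentiate barred indices trivially in the $(1,0)$ direction, so no $\bar\Psi$ terms actually occur in $\nabla_a\hat R_{i\bar b j\bar q}$, though allowing for them costs nothing.
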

\begin{proof}
Since 
$$
g^{\bar mi}g^{\bar pj}g^{\bar ba}\nabla_a\hat R_{i\bar bj	\bar q}=\Psi*\hat  R+g^{\bar mi}g^{\bar pj}g^{\bar ba}\hat \nabla_a\hat R_{i\bar bj	\bar q}\,,
$$
we deduce 
$$
g^{\bar mi}g^{\bar pj}g^{\bar ba}\nabla_a\hat R_{i\bar bj	\bar q}\Psi_{\bar m\bar p}^{\bar q}\leq C(S+S^{1/2})\leq C(1+S)\,. 
$$
Moreover, by using \eqref{seconda}, we have 
$$
g^{\bar mi}g^{\bar pj}g^{\bar ba}T_{ai}^k\hat R_{k\bar bj\bar q}\Psi_{\bar m\bar p}^{\bar q}
\leq C|T|_g S^{1/2}\leq C(|T|_g^2 +S)\leq C(1+S)
$$ 
and 
$$
\begin{aligned}
g^{\bar mi}g^{\bar pj}g^{\bar ba}T_{ai}^k\nabla_{\bar b}\Psi_{kj\bar q}\Psi_{\bar m\bar p}^{\bar q}\leq &\, C |T|_g |\bar \nabla \Psi|_g S^{1/2}\,. 
\end{aligned}
$$
Hence the claim follows. 
\end{proof}

Next we observe that Lemma \ref{lemma1} implies the following  
\begin{prop}\label{prop1}
We have 
$$
\Box  {\rm tr}_{\hat g} g\leq C |\dot g+\widetilde{\rm Ric}|_g-\frac{1}{K}S +C\,.
$$
\end{prop}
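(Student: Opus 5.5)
The estimate comes straight from Lemma \ref{lemma1}: we bound its three terms one at a time using the uniform equivalence \eqref{ass1}. Recall
$$
\Box {\rm tr}_{\hat g} g=\hat g^{\bar s r}(\dot g_{r \bar s}+\tilde R_{r \bar s})-\hat g^{\bar s r}g^{\bar lk}g^{\bar b a}\hat \nabla_{\bar l}g_{a\bar s}\hat \nabla_{k}g_{r\bar b} -\hat g^{\bar s r}g^{\bar lk}\hat R_{k\bar lr}^mg_{m\bar s}\,.
$$

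\emph{First term.} It is the trace with respect to $\hat g$ of the Hermitian tensor $\dot g+\widetilde{\rm Ric}$. Since $\hat g\geq K^{-1}g$, we have $|\hat g^{-1}|_g\leq C$. Cauchy--Schwarz in the $g$-norm then gives $|\hat g^{\bar s r}(\dot g_{r\bar s}+\tilde R_{r\bar s})|\leq C|\dot g+\widetilde{\rm Ric}|_g$.

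\emph{Third term.} It involves only $\hat R$, which is a fixed smooth tensor, together with $g$, $g^{-1}$ and $\hat g^{-1}$. All of these are bounded in the $\hat g$-norm by \eqref{ass1}, so on the compact manifold $M$ this term is bounded by $C$.

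\emph{Second term (the key step).} We must show it is at most $-\frac1K S$. Write $A_{k r\bar s}:=\hat\nabla_k g_{r\bar s}$. Because $\hat\nabla$ is compatible with the complex conjugation, $\hat\nabla_{\bar l}g_{a\bar s}=\overline{A_{l s\bar a}}$. The term then equals
$$
-\hat g^{\bar s r}g^{\bar lk}g^{\bar b a}A_{k r\bar b}\overline{A_{l s\bar a}}\,,
$$
which is minus a squared norm of $A$: two slots are contracted with $g$ and the remaining pair $(r,s)$ with $\hat g$. It is therefore nonpositive. To compare it with $S=|\hat\nabla g|_g^2$, where all three slots are contracted with $g$, we use $\hat g^{-1}\geq K^{-1}g^{-1}$, which follows from $g\geq K^{-1}\hat g$. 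At a point choose coordinates with $\hat g=I$ and $g={\rm diag}(\lambda_i)$, $K^{-1}\leq\lambda_i\leq K$. The expression becomes $-\sum \lambda_k^{-1}\lambda_b^{-1}|A_{k r\bar b}|^2$, while $S=\sum \lambda_k^{-1}\lambda_b^{-1}\lambda_r^{-1}|A_{kr\bar b}|^2$. Since $\lambda_r^{-1}\leq K$, we get $S\leq K\sum\lambda_k^{-1}\lambda_b^{-1}|A_{kr\bar b}|^2$, so the second term is $\leq -\frac1K S$.

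We also need $S=|\hat\nabla g|^2_g$ to agree with $|\Psi|^2_g$. This holds because $\Psi_{ij}^k=g^{\bar r k}\hat\nabla_i g_{j\bar r}$: lowering the index with $g$ turns $|\Psi|_g^2$ into exactly the full $g$-norm of $\hat\nabla g$. This is the same identification already used in the paper's definition of $S$.

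Adding the three bounds gives the stated inequality. The only delicate point is the index bookkeeping in the second term: one must check that the contraction pattern really is a Hermitian square, with complex conjugate entries in the right slots. The diagonalization argument handles this cleanly.
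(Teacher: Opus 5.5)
Your proposal is correct and follows the paper's route. The paper simply asserts that the proposition follows from Lemma~\ref{lemma1} together with \eqref{ass1}. You supply the omitted details: the trace bound for the first term, the boundedness of the $\hat R$ term, and the pointwise diagonalization showing that the gradient term is minus a Hermitian square bounded above by $-\frac{1}{K}S$.
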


In the last proposition of the present section we assume that $g$ satisfies further assumptions \eqref{ass2} and \eqref{ass3}: 

\begin{prop}\label{4.3}
Assume that $g$ satisfies  \eqref{ass2} and \eqref{ass3}, then 
\begin{equation*}
\begin{aligned}
\Box S\leq & -\frac12|\nabla \Psi|_g^2-\frac12|\bar \nabla \Psi|_g^2
+C(1+S+S^{3/2})+C |T|_g |\bar \nabla \Psi|_g S^{1/2}\,.
\end{aligned}
\end{equation*}
If further $g$ satisfies \eqref{ass4}, then 
\begin{equation}\label{boxS2}
\Box S\leq C(1+S+S^{3/2})\,.
\end{equation}
\end{prop}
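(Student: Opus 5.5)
Starting from Proposition \ref{lemma4.2}, I substitute the hypotheses \eqref{ass2} and \eqref{ass3} into the three terms involving $\dot g+\tilde R$ and then absorb the gradient terms of $\Psi$ into the good negative terms $-|\nabla\Psi|_g^2-|\bar\nabla\Psi|_g^2$ by Young's inequality.

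\emph{Step 1: the first-order term.} By \eqref{ass3},
$$
C|\nabla(\dot g+\tilde R)|_gS^{1/2}\le CK\big(|\nabla\Psi|_g+|\bar\nabla\Psi|_g\big)S^{1/2}+CK(1+S)S^{1/2}.
$$
Young's inequality $ab\le \tfrac14 a^2+b^2$ gives
$$
CK|\nabla\Psi|_gS^{1/2}\le \tfrac14|\nabla\Psi|_g^2+C^2K^2S,
$$
and the same bound holds with $\bar\nabla\Psi$ in place of $\nabla\Psi$. The remaining piece satisfies $(1+S)S^{1/2}\le C(1+S+S^{3/2})$, using $S^{1/2}\le 1+S$.

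\emph{Step 2: the zeroth-order term.} By \eqref{ass2},
$$
C|\dot g+\tilde R|_gS\le CK(1+S^{1/2})S=CK(S+S^{3/2}).
$$

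\emph{Step 3: collecting terms.} Adding the two steps to the bound of Proposition \ref{lemma4.2}, the negative terms become $-\tfrac34|\nabla\Psi|_g^2-\tfrac34|\bar\nabla\Psi|_g^2$. This is at most $-\tfrac12(|\nabla\Psi|_g^2+|\bar\nabla\Psi|_g^2)$, with a quarter of each term left over for later. All other contributions are bounded by $C(1+S+S^{3/2})$, plus the untouched term $C|T|_g|\bar\nabla\Psi|_gS^{1/2}$. This proves the first inequality.

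\emph{Step 4: using \eqref{ass4}.} Now assume also $|T|_g\le K$. Then
$$
C|T|_g|\bar\nabla\Psi|_gS^{1/2}\le CK|\bar\nabla\Psi|_gS^{1/2}\le \tfrac12|\bar\nabla\Psi|_g^2+C'S.
$$
This is absorbed by the $-\tfrac12|\bar\nabla\Psi|_g^2$ already available. Dropping the remaining nonpositive gradient terms gives \eqref{boxS2}.

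There is no real obstacle here. The only point needing care is to split the Young's inequality constants so that the total coefficient of $|\bar\nabla\Psi|_g^2$ stays nonpositive. With the choices above, Step 1 uses $\tfrac14$, Step 3 keeps $-\tfrac12$, and Step 4 uses the remaining $\tfrac12$. In the first inequality, the $S^{3/2}$ term cannot be improved with these hypotheses, since \eqref{ass2} only gives linear growth in $S^{1/2}$.
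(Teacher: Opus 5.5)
Your proof is correct and follows the paper's argument. You substitute \eqref{ass2}--\eqref{ass3} into Proposition \ref{lemma4.2}, absorb the cross terms $(|\nabla\Psi|_g+|\bar\nabla\Psi|_g)S^{1/2}$ (and, under \eqref{ass4}, also $|T|_g|\bar\nabla\Psi|_gS^{1/2}$) into the negative gradient terms by Young's inequality, and your explicit split of the Young constants makes precise what the paper leaves implicit.
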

\begin{proof} By combining Proposition \ref{lemma4.2} 
with assumptions  \eqref{ass1}--\eqref{ass3} we obtain 
 $$
\begin{aligned}
\Box S\leq & -|\nabla \Psi|_g^2-|\bar \nabla \Psi|_g^2+C (|\nabla \Psi|_g+ |\bar \nabla \Psi|_g)S^{1/2}+C |T|_g |\bar \nabla \Psi|_g S^{1/2}\\
&+C(1+S+S^{3/2})\,,
\end{aligned}
$$ 
and Young's inequality implies the statement. 

\medskip 
If $|T|_g\leq C$, then \eqref{boxS2} follows. 
\end{proof}

%

\section{Proof of Theorem \ref{main1}}

We can now give the proof of Theorem \ref{main1}. We will proceed by applying the maximum principle to some function $f$. We observe that the function under study can be seen as a non-local version of the one studied in \cite[Theorem 1.1]{SW2}.
\begin{proof}[Proof of Theorem $\ref{main1}$]
Let 
$$
f:=a-{\rm tr}_{\hat g}g\leq a
$$
where $a$ is a positive constant big enough so that 
$$
f\geq \frac{a}{2}\,,
$$
and let 
$$
G=\frac{S}{f}+A{\rm tr}_{\hat g}g\,,
$$
where $A$ is a positive constant which will be determined later. We have 
$$
\Box G=\frac{S}{f^2}\,\Box {\rm tr}_{\hat g}g+\frac{1}{f}\Box S-\frac{2}{f^2}\,{\rm Re}\left(\nabla {\rm tr}_{\hat g}g\cdot \bar \nabla S\right)-\frac{2S }{f^3}\,|\nabla {\rm tr}_{\hat g}g|_g+A\Box{\rm tr}_{\hat g}g
$$
and 
$$
\bar\nabla G=\frac{\bar \nabla S}{f}+\left(\frac{C S}{f^2}+A\right)\,\bar \nabla {\rm tr}_{\hat g}g\,.
$$
At a maximum point $(x_0,t_0)$ of $G$ we have 
$$
0\leq \frac{S}{f^2}\,\Box {\rm tr}_{\hat g}g+\frac{1}{f}\Box S-\frac{2}{f^2}\,{\rm Re}\left(\nabla {\rm tr}_{\hat g}g\cdot \bar \nabla S\right)-\frac{2S }{f^3}\,|\nabla {\rm tr}_{\hat g}g|_g^2+A\Box{\rm tr}_{\hat g}g
$$
and 
$$
-\frac{\bar \nabla S}{f}=\left(\frac{ S}{f^2}+A\right)\,\bar \nabla {\rm tr}_{\hat g}g\,.
$$
The last equation implies 
$$
-\frac{2}{f^2}\,{\rm Re}\left(\nabla {\rm tr}_{\hat g}g\cdot \bar \nabla S\right)
= 
\frac{2}{f}\left(\frac{ S}{f^2}+A\right)\,|\nabla {\rm tr}_{\hat g}g|^2_g
$$
at $(x_0,t_0)$ which gives 
$$
0\leq \frac{C S}{f^2}\,\Box {\rm tr}_{\hat g}g+\frac{1}{f}\Box S
+\frac{2A}{f}\,|\nabla {\rm tr}_{\hat g}g|_g^2+A\Box{\rm tr}_{\hat g}g\,,\quad \mbox{ at }(x_0,t_0)\,.
$$
By using \eqref{boxS2} and Proposition \ref{prop1}, we get 
$$
0\leq \frac{ CS}{f^2}\,\left(-\frac{S}{K}+C\right)+\frac{C}{f}(S^{3/2}+S+1)
+\frac{2ACS}{f}+A\left(-\frac{S}{K}+C\right)\,,\quad \mbox{ at }(x_0,t_0)\,.
$$
It follows 
$$
0\leq  -\frac{CS^2 }{Kf^2}+\frac{C}{f}S^{3/2}+S\left(\frac{C^2}{f^2}+\frac{C}{f}+\frac{2AC}{f}-\frac{A}{K}\right)+\frac{C}{f}+AC\,,\quad \mbox{ at }(x_0,t_0)\,.
$$
Suppose now that $S>1$ at $(x_0,t_0)$, we get 
$$
0\leq  S\left(\frac{4C^2}{a^2}+\frac{2C}{a}+\frac{4AC}{a}-\frac{A}{K}\right)+\frac{C}{f}+AC\,,\quad \mbox{ at }(x_0,t_0)\,.
$$
By choosing 
$$
a\geq 8CK\,,
$$
we get 
$$
0\leq  S\left(\frac{C}{a^2}+\frac{C}{a}-\frac{A}{2K}\right)+\frac{C}{a}+AC\,,\quad \mbox{ at }(x_0,t_0)\,.
$$

By assuming 
$$
A=2K\left(1+\frac{C}{a^2}+\frac{C}{a}\right)\,,
$$ 
we have 
$$
\frac{C}{a^2}+\frac{C}{a}-\frac{A}{2K}=-1\,,
$$
and 
$$
S\leq \frac{C}{a}+AC\,,\quad \mbox{ at }(x_0,t_0)\,,
$$
which implies a uniform bound on $S$, as required. 
\end{proof}
 
\section{Applications to Hermitian curvature flows}\label{applications}
In this section we apply Theorem \ref{main1} to the study of Hermitian curvature flows, in particular we prove Theorems \ref{ThHCF} and  \ref{ThH=0}. 

\medskip 
We consider the following preliminary lemma: 
\begin{lemma}\label{61}
Let $g=g(t)$ be a smooth curve of Hermitian metrics on a compact Hermitian manifold $(M,\hat g)$. Then:
\begin{enumerate}
\item[i.] ${\rm tr}_{g}\hat g\leq K\,,\quad  {\rm tr}_{ g}(\dot g+\widetilde{\rm Ric})\leq K \,\,\Longrightarrow\,\, {\rm tr}_{\hat g}g\leq C {\rm e}^{Ct}\,;$

\vspace{0.1cm}
\item[ii.] ${\rm tr}_{g}\hat g\leq K\,,\quad  {\rm tr}_{ g}(\dot g+\widetilde{\rm Ric})\leq 0\,,\quad  \hat {\rm Ric}\geq 0 \,\,\Longrightarrow\,\, {\rm tr}_{\hat g}g\leq C \,;$

\vspace{0.1cm}
\item[iii.] ${\rm tr}_{ \hat g} g\leq K\,,\quad  {\rm tr}_{ g}(\dot g+\widetilde{\rm Ric})\geq 0\,,\quad  \hat {\rm Ric}\leq 0 \,\,\Longrightarrow\,\, {\rm tr}_{ g} \hat g\leq C \,;$
\end{enumerate}
where $K\geq 0$ and $C$ depends on $M$, $K$, $g(0)$ and $\hat g$.
\end{lemma}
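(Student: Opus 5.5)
The plan is to prove all three items with Lemma \ref{lemma3} and the parabolic maximum principle applied to
$$
\varphi:=\log\frac{\det g}{\det \hat g}\,.
$$
This is a globally defined smooth function on $M$, since the ratio of determinants does not depend on the coordinates. By Lemma \ref{lemma3},
$$
\Box\varphi={\rm tr}_g(\dot g+\widetilde{\rm Ric})-{\rm tr}_g(\hat{\rm Ric})\,.
$$
Throughout, let $\lambda_1,\dots,\lambda_n>0$ be the eigenvalues of $g$ with respect to $\hat g$ at a point. Then $e^{\varphi}=\prod_i\lambda_i$, ${\rm tr}_{\hat g}g=\sum_i\lambda_i$ and ${\rm tr}_g\hat g=\sum_i\lambda_i^{-1}$. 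In each case, one trace bound fixes the eigenvalues on one side, the maximum principle bounds $\varphi$ on the other side, and the two together pin down each $\lambda_i$.

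For (i), the hypothesis ${\rm tr}_g\hat g\le K$ gives $\lambda_i\ge 1/K$ for every $i$. Since $\hat{\rm Ric}$ is a fixed tensor, $|{\rm tr}_g\hat{\rm Ric}|\le C\,{\rm tr}_g\hat g\le C$. Hence $\Box\varphi\le K+C=:C'$, so $\Box(\varphi-C't)\le 0$. The maximum principle on the compact manifold $M$ then gives $\varphi\le \max_M\varphi(0)+C't$, and $\max_M\varphi(0)$ depends only on $g(0)$ and $\hat g$. Each eigenvalue satisfies
$$
\lambda_i=e^{\varphi}\prod_{j\neq i}\lambda_j^{-1}\le K^{n-1}e^{\varphi}\le Ce^{Ct}\,,
$$
and summing over $i$ gives ${\rm tr}_{\hat g}g\le Ce^{Ct}$.

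For (ii), the extra sign hypotheses give ${\rm tr}_g(\dot g+\widetilde{\rm Ric})\le 0$ and ${\rm tr}_g\hat{\rm Ric}\ge 0$, because $\hat{\rm Ric}\ge 0$ as a $(1,1)$-form and $g>0$. So $\Box\varphi\le 0$, and the maximum principle gives $\varphi\le\max_M\varphi(0)$ with no growth in $t$. The eigenvalue argument of (i), using $\lambda_i\ge 1/K$ again, yields ${\rm tr}_{\hat g}g\le nK^{n-1}e^{\max\varphi(0)}$.

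For (iii) the roles of $g$ and $\hat g$ are swapped. Now ${\rm tr}_{\hat g}g\le K$ gives $\lambda_i\le K$. The hypotheses ${\rm tr}_g(\dot g+\widetilde{\rm Ric})\ge0$ and $\hat{\rm Ric}\le 0$ give $\Box\varphi\ge 0$, so the minimum principle gives $\varphi\ge\min_M\varphi(0)$. Then
$$
\lambda_i=e^{\varphi}\prod_{j\ne i}\lambda_j^{-1}\ge K^{1-n}e^{\min\varphi(0)}\,,
$$
which bounds ${\rm tr}_g\hat g=\sum_i\lambda_i^{-1}\le C$.

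I expect no step to be a real obstacle. The one point that needs care is the sign convention in Lemma \ref{lemma3}: it relies on $-\partial\bar\partial\log\det\hat g$ being $\hat{\rm Ric}$, so that the term $-{\rm tr}_g\hat{\rm Ric}$ has the sign used above. The other is that in (i) the constant $C$ depends only on $K$, $\hat g$ and $g(0)$, as the statement requires. The only analytic tool is the weak maximum principle for $\Box=\partial_t-\Delta$ on a compact manifold without boundary. This is legitimate here because $\Delta=g^{\bar sr}\partial_r\partial_{\bar s}$ has no first-order terms, so at an interior spatial maximum $\Delta\varphi\le 0$.
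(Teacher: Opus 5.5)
Your proposal is correct and follows the paper's proof. You apply the maximum principle to $\log\frac{\det g}{\det\hat g}$ via Lemma \ref{lemma3}, then convert the determinant bound into a trace bound using the other trace hypothesis. The differences are cosmetic: you use the eigenvalue identity $\lambda_i=e^{\varphi}\prod_{j\neq i}\lambda_j^{-1}$ where the paper uses the inequality ${\rm tr}_{\hat g}g\leq \frac{1}{(n-1)!}({\rm tr}_g\hat g)^{n-1}\frac{\det g}{\det\hat g}$, and you write out (ii) and (iii), which the paper only states follow easily.
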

\begin{proof}
By using Lemma \ref{lemma3} and our assumptions we have 
$$
\begin{aligned}
\Box  \log\frac{\det g}{\det \hat  g}=&{\rm tr}_g(\dot g+\widetilde {\rm Ric})-{\rm tr}_{g}(\hat{\rm Ric})\leq K+ C {\rm tr}_{g}\hat g\leq C\,,
\end{aligned}
$$
and 
$$
\frac{\det g}{\det \hat  g}\leq {\rm e}^{Ct}\,,
$$
where $C$ depends on on $M$, $K$, $g(0)$ and $\hat g$. Since  
$$
{\rm tr}_{\hat g}g\leq \frac{1}{(n-1)!}({\rm tr}_{g} \hat g)^{n-1}\frac{\det g}{\det \hat g}\,,
$$
i) follows.  ii) and iii) easily follow from Lemma \ref{lemma3}.  
\end{proof}

For example, if $g(t)$ satisfies  the pluriclosed flow, then  $\dot g+\widetilde {\rm Ric}\geq 0$, while if $g(t)$ solves the positive Hermitian curvature flow, then $\dot g+\widetilde {\rm Ric}\leq 0$.  

\begin{proof}[Proof of Theorem $\ref{ThHCF}$]
If $g=g(t)$ is a solution to a flow belonging to the HCFs family on a compact complex manifold. Then 
$$
\dot g=-\widetilde {\rm Ric}+T*T
$$
implies 
\begin{equation}\label{111}
|\dot  g+\widetilde {\rm Ric}|_g=|T*T|_g\leq C |T|_g^2
\end{equation}
and 
\begin{equation}\label{222}
|\nabla(\dot  g+\widetilde {\rm Ric})|_g=|\nabla T*T|_g \leq C (|\nabla \Psi|_g+S^{1/2}+1) |T|_g\\
\leq C (|\nabla \Psi|_g+S+1) |T|_g\,. 
\end{equation}
Assume $g(t)$ defined for $t\in [0,\tau)$ and 
$$
\limsup_{t\to \tau}\,\max \{|{\rm tr}_{g}\hat g|_{C^0(g)},|T|_{C^0(g)}\}=K
$$
for some $K<\infty$. Then 
$$
{\rm tr}_{g}\hat g\leq K\mbox{ and }\,\, |T|_{g}\leq K	\,,\quad \mbox{ for all }t\in [0,\tau)\,. 
$$
Since 
$$
{\rm tr}_{g}(\dot g+\widetilde{\rm Ric})={\rm tr}_{g}(T*T)\leq C|T|_g^2\leq C
$$
where $C$ depends on $K$, then assumptions of Lemma \ref{61} are satisfied and 
$$
{\rm tr}_{\hat g}g\leq C {\rm e}^{Ct}\,,
$$
where $C$ depends on $K$, $g(0)$ and $\hat g$. It follows that 
$$
{\rm tr}_{\hat g}g\leq C {\rm e}^{C\tau}\mbox{ for }t\in [0,\tau)
$$
and 
$$
C^{-1}\hat g\leq g\leq C g\,.
$$
Moreover, the bound on $|T|_g$ together \eqref{111} and \eqref{222} implies that all the assumption of Theorem \ref{main1} are satisfied and, consequently, $g$ satisfies a $C^1$-a priori bound. Since $g$ satisfies a second order quasi-linear equation, standard theory implies that $g$ extends smoothly past time $\tau$  (see \cite[Section 4]{estimates}), contradicting the maximality of $[0,\tau)$. 
\end{proof}

Next we focus on the second Chern-Ricci flow 
$$
\dot g=-\widetilde{\rm Ric} 
$$
and we prove in particular Theorem \ref{ThH=0}. We consider the following

\begin{prop}
Assume that $g=g(t)$ solves the second Chern-Ricci flow on a compact Hermitian manifold $(M,\hat g)$. 
Then 
\begin{enumerate}
\item[i)] $\hat {\rm Ric } \geq  0$ implies $\omega^n\leq C \hat \omega^n$;

\vspace{0.1cm}
\item[ii)] $\hat {\rm Ric } =  0$ implies $\frac{1}{C}\hat \omega^n\leq \omega^n\leq C \hat \omega^n$;

\vspace{0.1cm}
\item[iii)] ${\rm Ric }(g)=0$ at $t=0$ implies that ${\rm Ric }(g)=0$ is zero for every $t$ and that the volume form $\omega^n$ is constant;

\vspace{0.1cm}
\item[iv)] $\hat g$ Chern-flat implies $C^{-1}\hat g\leq g\leq C\hat g$;
\end{enumerate}
where in each case $C$ is constant depending on $\hat g$ and $g(0)$. 
\end{prop}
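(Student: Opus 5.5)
The plan is to use Lemma \ref{lemma3}. Along the second Chern-Ricci flow we have $\dot g+\widetilde{\rm Ric}=0$, so Lemma \ref{lemma3} reduces to
$$
\Box \log\frac{\det g}{\det \hat g}=-{\rm tr}_g(\hat{\rm Ric})\,.
$$
Items i) and ii) then follow from the parabolic maximum principle. If $\hat{\rm Ric}\geq 0$, the right-hand side is $\leq 0$, so $\log(\det g/\det\hat g)$ is bounded above by its maximum at $t=0$, which gives $\omega^n\leq C\hat\omega^n$ with $C$ depending on $g(0)$ and $\hat g$. If $\hat{\rm Ric}=0$, the function $\log(\det g/\det \hat g)$ solves the heat equation $\Box u=0$. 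The maximum principle applied to $u$ and to $-u$ then bounds it above and below by its values at $t=0$, which gives ii).

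For iii), choose $\hat g=g(0)$ in the formula above; more intrinsically, consider the first Chern-Ricci form ${\rm Ric}(g)=-i\partial\bar\partial\log\det g$. Its time derivative is $-i\partial\bar\partial\,{\rm tr}_g\dot g = i\partial\bar\partial\,{\rm tr}_g\widetilde{\rm Ric}$. Since ${\rm tr}_g\widetilde{\rm Ric}={\rm tr}_g{\rm Ric}$ (both equal the Chern scalar curvature), this is $i\partial\bar\partial\,{\rm tr}_g{\rm Ric}$. Now take $\hat g=g(0)$, which satisfies $\hat{\rm Ric}=0$. Then $u=\log(\det g/\det g(0))$ satisfies $\Box u=0$ with $u(0)=0$, so $u\equiv 0$ by uniqueness for the heat equation (via the maximum principle). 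Hence $\det g$ is constant in time, so $\omega^n$ is constant, and ${\rm Ric}(g)=-i\partial\bar\partial\log\det g={\rm Ric}(g(0))=0$ for all $t$. This approach avoids the evolution of ${\rm Ric}$ altogether.

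For iv), the main tool is Lemma \ref{lemma2}. Since $\hat g$ is Chern-flat, $\hat R=0$ and $\dot g+\widetilde{\rm Ric}=0$, so the lemma gives
$$
\Box\,{\rm tr}_g\hat g=-\hat g_{k\bar l}g^{\bar b a}g^{\bar r s}\Psi_{as}^k\Psi_{\bar b\bar r}^{\bar l}\leq 0\,.
$$
The maximum principle then gives ${\rm tr}_g\hat g\leq C$, that is, $g\geq C^{-1}\hat g$. Moreover $\hat{\rm Ric}=0$, so ii) bounds $\det g/\det\hat g$ from above. Combining these with the inequality ${\rm tr}_{\hat g}g\leq \frac{1}{(n-1)!}({\rm tr}_g\hat g)^{n-1}\det g/\det\hat g$ used in Lemma \ref{61} yields ${\rm tr}_{\hat g}g\leq C$, hence $g\leq C\hat g$.

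The only delicate point is the sign of the quadratic $\Psi$ term in Lemma \ref{lemma2}. It is a squared norm of $\Psi$ contracted against the positive-definite metric $\hat g$, so it is nonnegative and enters with a minus sign. Once that is checked, every step above is a direct application of the maximum principle.
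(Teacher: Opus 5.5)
Your treatment of i)--iii) follows the paper. You specialize Lemma \ref{lemma3} to $\dot g+\widetilde{\rm Ric}=0$ and run the maximum principle on $u=\log(\det g/\det\hat g)$. For iii) you take $\hat g=g(0)$, so that $\Box u=0$ and $u(0)=0$ force $u\equiv 0$. You also spell out the final step the paper leaves implicit, namely ${\rm Ric}(g(t))=-i\partial\bar\partial\log\det g(t)={\rm Ric}(g(0))=0$. Your side computation of $\partial_t{\rm Ric}$ is correct but never used, so you can drop it.

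For iv), the paper's proof says nothing at all, and your argument supplies a correct one.
\begin{itemize}
\item With $\hat R=0$ and $\dot g+\widetilde{\rm Ric}=0$, Lemma \ref{lemma2} gives $\Box\,{\rm tr}_g\hat g=-\hat g_{k\bar l}g^{\bar b a}g^{\bar r s}\Psi_{as}^k\Psi_{\bar b\bar r}^{\bar l}\leq 0$. This term is the squared norm of $\Psi$ with its lower indices measured by $g$ and its upper index by $\hat g$, so it is nonnegative.
\item The maximum principle then gives ${\rm tr}_g\hat g\leq\max_M{\rm tr}_{g(0)}\hat g$, hence $g\geq C^{-1}\hat g$.
\item Chern-flatness gives $\hat{\rm Ric}=0$, so ii) bounds $\det g/\det\hat g$ from above.
\item The eigenvalue inequality ${\rm tr}_{\hat g}g\leq\frac{1}{(n-1)!}({\rm tr}_g\hat g)^{n-1}\det g/\det\hat g$ then yields $g\leq C\hat g$.
\end{itemize}
This is the same mechanism the paper uses elsewhere: Proposition \ref{ultimo} for the lower bound and Lemma \ref{61} for the upper bound. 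Your proof is therefore the natural completion of the paper's argument and is complete as written.
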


\begin{proof}
For a solution to the second Chern-Ricci flow we have 
$$
\Box  \log\frac{\det g}{\det \hat g}=-{\rm tr}_{g}(\hat{\rm Ric})\,.
$$
Hence ${\rm tr}_{g}(\hat{\rm Ric})\geq 0$ implies $\Box  \log\frac{\det g}{\det \hat g}\leq 0$ and $i)$ follows. Moroever, 
if $\hat {\rm Ric} =0$, then 
$$
\Box  \log\frac{\det g}{\det \hat g}=0
$$
and ii) follows. 

If $g(0)$ is Chern-Ricci flat, we can take $\hat g=g_{0}$ and we have 
$$
\Box  \log\frac{\det g}{\det g(0)}=0\,\,,\quad \log\frac{\det g}{\det g(0)}\,_{|t=0}=0
$$
which implies that $\frac{\det g(t)}{\det g(0)}=1$ for every $t$ and iii) follows. 
\end{proof} 

\medskip
We have the following 
 
\begin{prop}\label{ultimo}
Let $(M,\hat{g})$ be a compact Hermitian manifold and assume that $\hat g$ has nonpositive holomorphic bisectional curvature. Let $g=g(t)$ be a maximal time solution to the second Chern-Ricci flow. 
If $|T|_g$ is bounded, then $g(t)$ is defined for $t\in[0,\infty)$. 
\end{prop}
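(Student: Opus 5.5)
The plan is to argue by contradiction through Theorem \ref{ThHCF}, since the second Chern-Ricci flow is the member of the HCF family with $Q=0$. Suppose the maximal existence time $\tau$ is finite. Because $|T|_g$ is bounded by hypothesis, Theorem \ref{ThHCF} forces $\limsup_{t\to\tau}|{\rm tr}_g\hat g|_{C^0(g)}=\infty$. So it suffices to prove a uniform bound on ${\rm tr}_g\hat g$ on $[0,\tau)$; this contradicts Theorem \ref{ThHCF}. For completeness I would also record the uniform equivalence \eqref{ass1}, which Lemma \ref{61} gives once ${\rm tr}_g\hat g$ is bounded.

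\emph{Step 1: bound ${\rm tr}_g\hat g$.} I would apply Lemma \ref{lemma2} along the flow. Since $\dot g+\widetilde{\rm Ric}=0$, the first term vanishes and we get
$$
\Box\,{\rm tr}_g\hat g=g^{\bar s r}g^{\bar l k}\hat R_{r\bar s k\bar l}-\hat g_{k\bar l}g^{\bar b a}g^{\bar r s}\Psi_{as}^k\Psi_{\bar b\bar r}^{\bar l}\,.
$$
The last term is minus a nonnegative quantity, because it is a norm of $\Psi$ built from the positive definite tensors $\hat g$ and $g^{-1}$. For the curvature term, fix a point and choose coordinates in which $\hat g$ is the identity and $g$ is diagonal with eigenvalues $\lambda_i$. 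The term then becomes $\sum_{i,k}\lambda_i^{-1}\lambda_k^{-1}\hat R_{i\bar i k\bar k}$. This is $\le 0$ because each $\hat R_{i\bar ik\bar k}$ is a holomorphic bisectional curvature of $\hat g$. Hence $\Box\,{\rm tr}_g\hat g\le 0$, and the parabolic maximum principle gives ${\rm tr}_g\hat g\le \max_M {\rm tr}_{g(0)}\hat g=:K_0$ for all $t\in[0,\tau)$.

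\emph{Step 2: uniform equivalence.} Since ${\rm tr}_g(\dot g+\widetilde{\rm Ric})=0$, Lemma \ref{61}(i) applies with $K=K_0$ and gives ${\rm tr}_{\hat g}g\le Ce^{Ct}\le Ce^{C\tau}$. Together with Step 1 this yields $C^{-1}\hat g\le g\le C\hat g$ on $[0,\tau)$.

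\emph{Step 3: conclusion.} The quantity $\max\{|{\rm tr}_g\hat g|_{C^0(g)},|T|_{C^0(g)}\}$ is then bounded on $[0,\tau)$. This contradicts Theorem \ref{ThHCF}, so $\tau=\infty$.

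Equivalently, one could verify the hypotheses of Theorem \ref{main1} directly, since \eqref{ass2}--\eqref{ass3} hold trivially with $\dot g+\widetilde{\rm Ric}=0$, and then use the quasi-linear extension argument.

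The main obstacle is Step 1, specifically checking the sign convention for $\hat R_{r\bar s k\bar l}$ in Lemma \ref{lemma2}. One must confirm that with the paper's curvature convention, "nonpositive bisectional curvature" really gives $g^{\bar s r}g^{\bar lk}\hat R_{r\bar sk\bar l}\le 0$ and not the opposite sign. I would check this against the formula $R_{k\bar l r\bar s}=-g_{r\bar s,k\bar l}+\dots$, which is the standard convention under which bisectional curvature is $R(X,\bar X,Y,\bar Y)$.
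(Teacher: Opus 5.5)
Your proposal is correct and follows the paper's proof: with $\dot g+\widetilde{\rm Ric}=0$, Lemma \ref{lemma2} and nonpositive bisectional curvature of $\hat g$ give $\Box\,{\rm tr}_g\hat g\le 0$, hence a bound on ${\rm tr}_g\hat g$, and Theorem \ref{ThHCF} then rules out $\tau<\infty$; your sign check is also right, since under the paper's convention $\hat R_{i\bar i k\bar k}$ is the bisectional curvature and enters Lemma \ref{lemma2} with a plus sign. Your Step 2 is harmless but redundant, because the uniform equivalence is already derived inside the proof of Theorem \ref{ThHCF}.
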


\begin{proof}
If $\hat g$ has nonpositive holomorphic bisectional curvature, then Lemma \ref{lemma2} implies that ${\rm tr}_{g} \hat g$ is bounded and Theorem \ref{ThHCF} implies the claim. 
\end{proof}

Proposition \ref{ultimo} was proved in \cite{BV} under the strongest assumption that $M$ is the compact quotient of a Lie group by a lattice (and then it has a Chern-flat metric).   

\medskip 
Now we prove Theorem \ref{ThH=0}: 

\begin{proof}[Proof of Theorem $\ref{ThH=0}$]
In  view of \cite[Theorem 1.1]{HCF}, if $g(t)$, $t\in [0,\tau)$, is a maximal-time solution to a flow belonging to the family of HCF's and $\tau<\infty$, then  
$$
\limsup_{t\to \tau}\max\{ \|{\rm Rm}\|_{C^0(g)}, \|\nabla T\|_{C^0(g(t))}, \|T\|^2_{C^0(g(t))}\}=\infty\,.
$$
Let $g=g(t)$ be a solution to the second Chern-Ricci flow. We show that a bound on $|{\rm Rm}|_{g}$ implies also a bound on  $|T|_{g}$ and $|\nabla T|_g$. Indeed, if $|{\rm Rm}|_g$ is bounded, Lemma \ref{boxT} implies 
\begin{equation*}
\square\,|T|_g^2\leq C|T|_g^2\,,
\end{equation*}
where $C$ depends on the complex dimension of $M$,  
and the estimate on $|T|_g$ follows. 
Moreover, Lemma \ref{evcurv} implies   
\begin{equation*}
\square \,|\operatorname{Rm}|_g^2\leq -\frac{1}{2}|\nabla\operatorname{Rm}|_g^2-\frac{1}{2}|\bar\nabla\operatorname{Rm}|_g^2+C\,.
\end{equation*}
and from \cite{HCF} it follows  
$$
\square\, |\nabla T|_g^2\leq -|\nabla T|_g^2+\frac12 |\nabla {\rm Rm}|_g^2+C(|\nabla T|_g^2+1)\,. 
$$
Therefore we have  
$$
\begin{aligned}
 \square \left(|\operatorname{Rm}|_g^2+ |\nabla T|_g^2\right)\leq &-\frac{1}{2}|\nabla\operatorname{Rm}|_g^2-\frac{1}{2}|\bar\nabla\operatorname{Rm}|_g^2
 -|\nabla T|_g^2+\frac12 |\nabla {\rm Rm}|_g^2+C(|\nabla T|_g^2+1)\\
 \leq & C\left(|\nabla T|_g^2+|\operatorname{Rm}|_g^2\right)
\end{aligned}
$$
and the maximum principle implies the claim. 
\end{proof}

\section{Calabi estimate for a larger class of flows}

In the present section we focus on geometric flows of Hermitian metrics satisfying condition \eqref{moregeneral}. We briefly recall some flows introduced in the literature which fit in this class:  
 
\begin{itemize}
\item the Chern-Ricci flow belongs to this class of flows since in view of \eqref{R-R} if $g$ solves the Chern-Ricci, then  
$$
\dot g_{i\bar j}=-R_{i\bar j}=-\tilde{R}_{i\bar j}-g^{\bar lk}\left(\nabla_{\bar l}T_{ki\bar j}-\nabla_iT_{\bar l\bar jk}\right)\,;
$$

\vspace{0.1cm}
\item more generally, we can consider the geometric flows of the form 
$$
\partial_tg=-a{\rm Ric}+(a-1)\widetilde{\rm Ric}
$$
introduced in \cite{Liu}, where $a\in\R$ is a fixed parameter; 

\vspace{0.1cm}
\item another interesting class of flows was introduced by Boling \cite{Boling} and is governed by the equation 
$$
\partial_{t}g=-{\rm Ric}_s^{1,1}\,,
$$
where ${\rm Ric}_s$ is the Ricci form of the {\em Gauduchon connection} $\nabla^s=s\nabla+(1-s)\nabla^b$ and $\nabla^b$ denotes the 
{\em Bismut connection} \cite{Bismut}.  For $s=1$ the flow reduces to the Chern-Ricci flow, while for $s=0$ and $g$ pluriclosed the flow reduces to the pluriclosed flow; 

\vspace{0.1cm}
\item finally we consider the anti-complexified Chern-Ricci flow \cite{BGV}. This flow is defined on hypercomplex manifolds $(M,I,J,K)$ as
$$
\partial_tg=-\frac12({\rm Ric}+J{\rm Ric})
$$ 
where $g(t)$ is hyperhermitian for every $t$ and ${\rm Ric}$ is computed with respect to $g$ and $I$.  
\end{itemize} 
 
\begin{proof}[Proof of Proposition $\ref{2cor}$]
 Since \begin{equation*}
    \bar\nabla T=\bar\Psi*T+\bar{\hat \nabla} T\,,
\end{equation*}
we have 
\begin{equation*}
|\dot g+\tilde R|_g \leq C|\bar\nabla T|_g+C|T|^2_g\leq C(S^{1/2}+1)\,. 
\end{equation*} Furthermore,
\begin{equation*}
\nabla(\dot g+\tilde R)=\nabla\bar\nabla T+\nabla T*T\,.
\end{equation*} 
By using \eqref{terza} and \eqref{quinta} we get
\begin{equation*}
\begin{split}
|\nabla(\dot g+\tilde R)|_g\leq& C|\nabla\bar\nabla T|_g+C|\nabla T|_g|T|_g \\ \leq & C\left( |\bar\nabla\Psi|_g|T|_g+S^{1/2}|\hat\nabla T|_g+S|T|_g+|\hat\nabla\bar{\hat\nabla}T|_g+|\nabla\Psi|_g+S^{1/2}+1\right) \\
\leq &C\left(|\nabla\Psi|+|\bar\nabla\Psi|+S+S^{1/2}|\hat\nabla T|_g+1\right)\,.
\end{split}
    \end{equation*}Finally, we observe that since $T=g*\partial\omega$, we have
    \begin{equation*}
        \hat\nabla T=\partial T+\hat\Gamma*T=\partial\omega*\partial g+\hat\Gamma*T=\partial\omega*\Psi-\partial\omega*\hat\Gamma*g+\hat\Gamma*T\,,
    \end{equation*}hence
    \begin{equation*}
        |\hat\nabla T|_g\leq C(S^{1/2}+1)
    \end{equation*} and 
    \begin{equation*}
        S^{1/2}|\hat\nabla T|_g\leq CS\,.
    \end{equation*}
The claim follows. 
\end{proof}

\end{document}